\documentclass[12pt, reqno]{amsart}
\usepackage{amsmath,amssymb,amsthm}
\usepackage{amsmath, amssymb}
\usepackage{amsthm, amsfonts, mathrsfs}
\usepackage{mathptmx}
\usepackage{fullpage}
\usepackage{amsfonts,graphicx}
\numberwithin{equation}{section}
\usepackage[colorlinks=true, pdfstartview=FitV, linkcolor=blue, citecolor=blue, urlcolor=blue]{hyperref}

\allowdisplaybreaks

\numberwithin{equation}{section}

\def\div{ \hbox{\rm div}\,  }

\numberwithin{equation}{section}
\newtheorem{theorem}{Theorem}[section]

\newtheorem{lemma}[theorem]{Lemma}

\numberwithin{equation}{section}

\theoremstyle{remark}
\newtheorem{remark}[theorem]{Remark}
\def\T{ \mathbb{T} }

\newcommand{\R}{{\mathbb{R}}}
\newcommand{\Dv}{{\rm div}}
\def\div{ \hbox{\rm div}\,  }
\def\u{ \mathbf{u} }

\def\b{ \mathbf{B} }
\def\H{ \mathbf{H} }
\def\T{ \mathbb{T} }

\newcommand{\norm}[2]{\left\lVert #1 \right\rVert_{#2}}

\begin{document}

\title[3D Compressible MHD equations]{Stabilization by a background magnetic field: global well-posedness of the compressible isentropic ideal MHD equations with velocity damping}

\author[L. Qiao]{Liening  Qiao }
\address[L. Qiao]{ School of Mathematics and  Statistics, Shandong University of
Technology,  Zibo 255049,  Shandong Province, China}
%\address[L. Qiao]{$^{2}$ School of Mathematical Sciences, Xiamen University, Xiamen 361005, Fujian, China}
\email{lnqiao2026@163.com}

\author[J. Wu]{Jiahong Wu}
\address[J. Wu]{Department of Mathematics, University of Notre
	Dame, Notre Dame, IN 46556, USA } \email{jwu29@nd.edu}
 \author[F. Xu]{Fuyi  Xu$^{\dag}$}
\address[F. Xu]{$^{\dag}$ School of Mathematics and  Statistics, Shandong University of
Technology,  Zibo 255049,  Shandong Province, China} \email{zbxufuyi@163.com (Corresponding author)}

\author[X. Zhai]{Xiaoping Zhai}
\address[X. Zhai]{School of Mathematics and Statistics, Guangdong University of Technology,
	Guangzhou, 510520, China.} \email{pingxiaozhai@163.com }

\date{}
\subjclass[2020]{35Q35, 35A01, 35A02, 76W05}
\keywords{compressible ideal MHD equations;  global solutions;   velocity damping.}

\begin{abstract} In the present paper,
we study the Cauchy problem for the triple-dimensional isentropic
compressible ideal (inviscid and non-resistive) magnetohydrodynamic
equations with velocity damping on the periodic torus $\mathbb{T}^3$.
The system admits a steady equilibrium consisting of a constant density
$\bar{\rho}$ and a uniform background magnetic field $\omega\in\mathbb{R}^3$.
We prove that this equilibrium is nonlinearly stable. More precisely,
we show that if the initial data
are a sufficiently small perturbation of $(\bar{\rho},\mathbf{0},\omega)$
 in a high-order Sobolev space, and if
$\omega$ satisfies a Diophantine condition, then the system admits a unique global smooth solution.  Moreover, the
perturbations decay
algebraically in time. To the best of our knowledge, this is the first global well-posedness
result for the 3D isentropic compressible ideal MHD
system.  The proof reveals a hidden dissipation mechanism: although
neither the density equation nor the magnetic field equation contains
explicit diffusion or damping, the coupling between the velocity and
the magnetic field through the background field $\omega$, combined with
a Diophantine--Poincar\'{e} inequality, generates effective dissipation
for both the density perturbation and the magnetic field perturbation,
which together with the velocity damping yields global regularity and
time decay.
\end{abstract}

\maketitle

\tableofcontents

%% ============================================================
%%  REVISED INTRODUCTION
%% ============================================================

\section{Introduction and Main Result}
\label{sec1}

%% ------------------------------------------------------------------
\subsection{Physical background and motivation}
%% ------------------------------------------------------------------

The dynamics of compressible inviscid fluids are governed by the
Euler equations for conservation of mass and momentum.  In many
physically relevant settings the fluid moves through a porous medium,
or is subject to a linear drag force proportional to the local
velocity, a mechanism referred to as velocity damping.
The resulting system,
\begin{equation}\label{Euler-damping}
	\left\{
	\begin{aligned}
		&\partial_t \rho + \mathrm{div}(\rho \mathbf{u}) = 0, \\
		&\partial_t(\rho \mathbf{u}) + \mathrm{div}(\rho \mathbf{u}\otimes\mathbf{u})
		+ \kappa\rho\mathbf{u} + \nabla P(\rho) = 0,
	\end{aligned}
	\right.
\end{equation}
arises in the modelling of gas flow through porous media
(Darcy's law \cite{H-L,H-D}), the propagation of acoustic waves in
dissipative media, and the large-time dynamics of cosmological matter
distributions \cite{STW}.  The damping term $\kappa\rho\mathbf{u}$
($\kappa>0$) plays a fundamentally regularizing role. It dissipates
kinetic energy and prevents the formation of shocks that would otherwise
develop from smooth data in finite time for the undamped Euler system ($\kappa=0$).
Seminal works by Hsiao--Luo \cite{H-L}, Hsiao--Serre \cite{H-D},
Huang--Pan \cite{H-P}, and Huang--Marcati--Pan \cite{H-M-P} established
convergence of solutions toward the nonlinear diffusion wave described by
the porous medium equation in the one and multi-dimensional settings.
For smooth small-data solutions on the torus, Sideris--Thomases--Wang
\cite{STW} proved global existence and exponential decay for the
three-dimensional problem.  Thus, on the periodic domain $\mathbb{T}^3$,
 system \eqref{Euler-damping} is globally well-posed for small
perturbations of a constant state, and the damping is solely responsible
for this regularity.

\medskip
Electrically conducting compressible fluids such as plasmas, liquid metals,
ionised gases are described by the magnetohydrodynamic (MHD)
equations, which couple the compressible Euler equations with Maxwell's
equations of electromagnetism via the Lorentz force.  In the
ideal regime, where both the kinematic
viscosity $\mu, \lambda$ and the magnetic resistivity $\nu$ vanish, the
governing system with velocity damping on $\mathbb{T}^3$ is
\begin{equation}\label{MHD1111}
	\left\{
	\begin{aligned}
		&\partial_t \rho + \mathrm{div}(\rho \mathbf{u}) = 0, \\
		&\partial_t(\rho \mathbf{u}) + \mathrm{div}(\rho\mathbf{u}\otimes\mathbf{u})
		+ \kappa\rho\mathbf{u} + \nabla P
		= \mathbf{H}\cdot\nabla\mathbf{H} - \tfrac{1}{2}\nabla|\mathbf{H}|^2, \\
		&\partial_t\mathbf{H} + \mathbf{u}\cdot\nabla\mathbf{H}
		= \mathbf{H}\cdot\nabla\mathbf{u} - \mathbf{H}\,\mathrm{div}\,\mathbf{u}, \\
		&\mathrm{div}\,\mathbf{H} = 0, \\
		&(\rho,\mathbf{u},\mathbf{H})|_{t=0} = (\rho_0,\mathbf{u}_0,\mathbf{H}_0).
	\end{aligned}
	\right.
\end{equation}
Here $\rho$, $\mathbf{u}$, and $\mathbf{H}$ denote density, velocity, and
magnetic field, $\kappa>0$ is the damping coefficient, and the isentropic
pressure $P=P(\rho)$ is a smooth, strictly increasing function.  The
right-hand side of the momentum equation is the Lorentz body force
generated by the magnetic field, while the induction equation for
$\mathbf{H}$ encodes flux freezing property of ideal
(perfectly conducting) MHD, namely in the absence of resistivity the magnetic
field lines are transported with the fluid.

\medskip
System \eqref{MHD1111} models a broad class of physical phenomena including
magnetised plasma dynamics in fusion devices
\cite{Cabannes-H,Landau-Lifshitz}, astrophysical accretion discs
\cite{Polovin-Demutskii}, liquid-metal flows in metallurgical
engineering, and geophysical dynamo processes in planetary interiors.
Even though \eqref{MHD1111} is the most transparent,
member of the compressible MHD hierarchy, the absence of viscosity and resistivity renders  \eqref{MHD1111} the most
challenging mathematically.

%% ------------------------------------------------------------------
\subsection{Equilibrium state and the stability problem}
%% ------------------------------------------------------------------

A fundamental and physically natural \emph{steady state} of
\eqref{MHD1111} is the configuration
\begin{equation}\label{equilibrium}
	(\rho,\mathbf{u},\mathbf{H}) \equiv (\bar\rho,\,\mathbf{0},\,\omega),
\end{equation}
where $\bar\rho>0$ is a constant reference density and
$\omega\in\mathbb{R}^3\setminus\{0\}$ is a constant background
magnetic field.  This state represents a perfectly conducting fluid at
rest, threaded by a uniform magnetic field.  It is easily verified
that \eqref{equilibrium} satisfies \eqref{MHD1111} identically.

\medskip
The central question of this paper is whether the
equilibrium \eqref{equilibrium} is nonlinearly stable in the class
of smooth solutions. Given initial data that are a small perturbation of
$(\bar\rho,\mathbf{0},\omega)$ in a high-order Sobolev space, does a
unique global smooth solution exist and remain close to the equilibrium
for all time?  In the absence of viscosity or magnetic diffusion, this
question is genuinely open in general.  The present paper provides an
affirmative answer by showing that the combination of velocity damping
and a background magnetic field satisfying a Diophantine condition
(see \eqref{Diophantine0} below) is sufficient to guarantee global
regularity and algebraic decay.

%% ------------------------------------------------------------------
\subsection{Perturbation formulation}
%% ------------------------------------------------------------------

Setting $\kappa=1$ for simplicity (the value of $\kappa$ does not affect
the analysis), we introduce
\begin{equation}\label{perturbation-def}
	a := \rho - \bar\rho, \qquad \mathbf{u}, \qquad \b := \mathbf{H}-\omega.
\end{equation}
Substituting \eqref{perturbation-def} into \eqref{MHD1111}, the perturbed system reads
\begin{equation}\label{MHD1}
	\left\{
	\begin{aligned}
		&\partial_t a + \mathrm{div}(\rho\mathbf{u}) = 0, \\[4pt]
		&\rho(\partial_t\mathbf{u} + \mathbf{u}\cdot\nabla\mathbf{u}) + \rho\mathbf{u}
		+ \nabla P
		= \underbrace{\omega\cdot\nabla\b
			- \nabla(\omega\cdot\b)}_{\text{linear in }\b}
		+ \underbrace{\b\cdot\nabla\b
			- \tfrac{1}{2}\nabla|\b|^2}_{\text{nonlinear}}, \\[4pt]
		&\partial_t\b + \mathbf{u}\cdot\nabla\b
		= \underbrace{\omega\cdot\nabla\mathbf{u} - \omega\,\mathrm{div}\,\mathbf{u}}_{\text{linear in }\mathbf{u}}
		+ \underbrace{\b\cdot\nabla\mathbf{u} - \b\,\mathrm{div}\,\mathbf{u}}_{\text{nonlinear}}, \\[4pt]
		&\mathrm{div}\,\b = 0, \\[4pt]
		&(\rho,\mathbf{u},\b)|_{t=0} = (\rho_0,\mathbf{u}_0,\b_0).
	\end{aligned}
	\right.
\end{equation}
Taking $\bar\rho=1$ for notational convenience, denoting $\beta:=P'(1)>0$,
and separating linear from nonlinear contributions, \eqref{MHD1} can be written as
\begin{equation}\label{LinNon}
	\left\{
	\begin{aligned}
		&\partial_t a + \mathrm{div}\,\mathbf{u} = f_1, \\[3pt]
		&\partial_t\mathbf{u} + \beta\nabla a + \mathbf{u}
		= \omega\cdot\nabla\b - \nabla(\omega\cdot\b) + f_2, \\[3pt]
		&\partial_t\b
		= \omega\cdot\nabla\mathbf{u} - \omega\,\mathrm{div}\,\mathbf{u} + f_3, \\[3pt]
		&\mathrm{div}\,\b = 0,
	\end{aligned}
	\right.
\end{equation}
where the nonlinear remainder terms are
\begin{equation}\label{nonlinear-terms}
	\begin{aligned}
		f_1 &:= -\mathrm{div}(a\mathbf{u}), \\
		f_2 &:= -a\partial_t\mathbf{u} - (P'(\rho)-\beta)\nabla a-a\u
		- \rho\,\mathbf{u}\cdot\nabla\mathbf{u}
		+ \b\cdot\nabla\b - \tfrac{1}{2}\nabla|\b|^2, \\
		f_3 &:= -\mathbf{u}\cdot\nabla\b
		+ \b\cdot\nabla\mathbf{u} - \b\,\mathrm{div}\,\mathbf{u}.
	\end{aligned}
\end{equation}

%% ------------------------------------------------------------------
\subsection{Main difficulties}
%% ------------------------------------------------------------------

The global analysis of \eqref{MHD1} faces several difficulties
that are different from those encountered in the viscous or resistive settings.

\medskip
First, the continuity equation $\partial_t a + \mathrm{div}(\rho\mathbf{u})=0$
contains no diffusive or damping term acting directly on $a$.  In the
compressible viscous setting, the combination of viscous stresses
and the pressure gradient yields an effective viscous flux
$F = \mathrm{div}\,\mathbf{u} - P$ with better regularity properties than
its constituents individually \cite{HS2,DH1}, and this regularity
is used to extract a damping effect for the density.  In the ideal
setting ($\mu=\lambda=0$) this device is entirely unavailable; see
\cite{Dong-Wu-Zhai,Wu-Wu} for discussions of why the effective-flux
method fails here.

\medskip
Second, the induction equation
is a pure transport-stretching equation with no smoothing in $\b$.
High-frequency oscillations in the magnetic field are neither damped nor
diffused.  For the incompressible ideal MHD system, the
Els\"{a}sser-variable transformation $\mathbf{z}^\pm = \mathbf{u}\pm\b$
produces a symmetric hyperbolic system that allows for uniform-in-time energy estimates
\cite{BSS,Cai,HXY}. However,  in the compressible case the divergence of $\mathbf{u}$
destroys this symmetry, and no analogous transformation is available. %Moreover, th emethod applied in \cite{BSS,Cai} is purely hyperbolic (characteristic method) and hence could not be applied in our case.

\medskip
Third,  a naive energy estimate yields
$\frac{d}{dt}\mathcal{E}(t) + \|\mathbf{u}\|^2 \leq 0$ because damping acts only on $\mathbf{u}$,  where
$\mathcal{E}$ contains contributions from $a$ and $\b$ that
cannot be controlled by the dissipated quantity $\|\mathbf{u}\|^2$
without additional structure. In addition,
the nonlinear interactions $\b\cdot\nabla\b$,
$\b\cdot\nabla\mathbf{u}$, and $\mathbf{u}\cdot\nabla\b$
couple all three unknowns in a way that makes standard energy methods for
the viscous or resistive MHD systems
\cite{Dong-Wu-Zhai,L-X-Z,Li-Xu-Zhai,L-H,L-H1,Suen,SuenA,Wu-Xu-Zhai,Wu-Zhai,Wu-Zhu}
directly inapplicable. Cancellations that were exploited in those settings
are no longer accessible.

%% ------------------------------------------------------------------
\subsection{Stabilizing effect of the background magnetic field}
%% ------------------------------------------------------------------
It has long been recognized, from both physical experiments and numerical
simulations, that a sufficiently strong uniform background magnetic
	field can suppress turbulence and stabilize an electrically conducting
fluid.  This phenomenon was first identified theoretically by
Alfv\'{e}n \cite{ALF}, who showed that a uniform magnetic field supports
transverse wave propagation (now known as Alfv\'en waves), which
transport momentum and energy along field lines without dissipation.

\medskip
Experimental confirmation of this stabilizing effect is abundant.
Gallet, Berhanu, and Mordant \cite{GBM} reported measurements of forced
turbulence in a swirling liquid-metal flow (sodium) subject to an
externally imposed magnetic field. As the field strength was increased,
turbulent fluctuations were progressively suppressed and the flow
became quasi-two-dimensional and laminar along the field direction.
Califano and Chiuderi \cite{CC} demonstrated numerically that resistivity-independent dissipation of MHD waves occurs in inhomogeneous plasmas threaded by a background field, showing that even without resistivity the field geometry alone can transfer energy from large to small scales in a controlled manner.

\medskip
In engineering practice, the stabilizing effect of an external magnetic
field is exploited routinely in metallurgical continuous casting
processes \cite{Davidson}, where liquid steel is threaded by a DC
magnetic field to suppress turbulent fluctuations and control
solidification quality; in electromagnetic braking of liquid-metal
flows in nuclear blankets of fusion reactors \cite{Moreau}; and in
magnetohydrodynamic generators, where the imposition of a strong field
perpendicular to the flow is used to maintain laminar operation.
More recently, magnetic confinement in tokamak devices relies
fundamentally on the idea that a sufficiently strong and appropriately
structured magnetic field prevents the plasma from touching the wall,
a stability problem that is mathematically related to the one we study
here \cite{Cabannes-H,Landau-Lifshitz}.

\medskip
The mathematical mechanism underlying the stabilising effect is most
transparent at the level of the linearized system of \eqref{LinNon},
\begin{equation}\label{linearised}
	\left\{
	\begin{aligned}
		&\partial_t a + \mathrm{div}\,\mathbf{u} = 0, \\[3pt]
		&\partial_t\mathbf{u} + \beta\nabla a + \mathbf{u}
		= \omega\cdot\nabla\b - \nabla(\omega\cdot\b), \\[3pt]
		&\partial_t\b
		= \omega\cdot\nabla\mathbf{u} - \omega\,\mathrm{div}\,\mathbf{u}, \\[3pt]
		&\mathrm{div}\,\b = 0.
	\end{aligned}
	\right.
\end{equation}
In what follows, we intend  to explore the hidden wave structure  in \eqref{linearised} and to understand the
stability mechanism. To explain this clearly, taking the divergence of the momentum equation and using the continuity
equation yields a damped wave equation for the density perturbation,
\begin{equation}\label{wave-density}
	\partial_{tt} a - \beta\Delta a + \partial_t a
	=- \mathrm{div}\bigl(\omega\cdot\nabla\b - \nabla(\omega\cdot\b)\bigr).
\end{equation}
The left-hand side describes a damped acoustic wave with
sound speed $\sqrt{\beta}=\sqrt{P'(1)}$ and damping rate $1$.  The
right-hand side is a source driven by the magnetic field perturbation,
revealing the coupling between acoustic and magnetic modes.

\medskip
On the other hand, differentiating the induction equation \eqref{linearised}$_3$ in time
and substituting the momentum equation yields a wave equation for
	the magnetic field perturbation,
\begin{equation}\label{wave-magnetic}\begin{split}
	\partial_{tt}\b - (\omega\cdot\nabla)^2\b
	+ \partial_t\b
	&= -\omega\bigl[\partial_t\,\mathrm{div}\,\mathbf{u}+\mathrm{div}\,\mathbf{u}\bigr]
	- (\omega\cdot\nabla)\bigl(\beta\nabla a + \mathbf{u}\bigr)
	\\
&\quad+ (\omega\cdot\nabla)(\mathbf{u} - \nabla(\omega\cdot\b)).
\end{split}\end{equation}
The operator $(\omega\cdot\nabla)^2$ on the left-hand side governs
propagation of Alfv\'en waves along the background field $\omega$
at Alfv\'{e}n speed $|\omega|$.  The damping term $\partial_t\b$
arises indirectly through the velocity coupling and is the mathematical
embodiment of the physical stabilisation mechanism.

\medskip
From the above analysis, it is surprising that the coupled system \eqref{wave-density}--\eqref{wave-magnetic} for $a, \b$  reveals a double-wave structure:  acoustic waves in the density are driven
by magnetic fluctuations, while Alfv\'{e}nic waves in the magnetic field are
forced by acoustic and velocity perturbations.  It is this
two-way coupling, mediated by the background field $\omega$, that
disperses energy and prevents the concentration of high-frequency modes
that would otherwise lead to finite-time breakdown. The wave
structure  provides much more stabilization and regularization properties
than the original system \eqref{LinNon}  and hence, the stability result of
the solutions becomes possible.

\medskip
On the periodic torus $\mathbb{T}^3$, the wave operator $(\omega\cdot\nabla)$
acts on Fourier modes $e^{i\mathbf{k}\cdot x}$ as multiplication by
$i(\omega\cdot\mathbf{k})$.  For the resulting Poincar\'e-type inequality
to be useful, namely, for $(\omega\cdot\nabla)$ to be invertible with
controlled loss of derivatives, one needs the inner products
$\omega\cdot\mathbf{k}$ to be bounded away from zero for all nonzero
lattice vectors $\mathbf{k}$.  The precise condition is the
Diophantine condition introduced by Chen--Zhang--Zhou \cite{Chen-Zhang-Zhou}:
\begin{equation}\label{Diophantine0}
	\exists\; c>0 \;\text{ and }\; r>2 \;\text{ such that }\;
	|\omega\cdot\mathbf{k}| \geq \frac{c}{|\mathbf{k}|^r}, \qquad
	\forall\;\mathbf{k}\in\mathbb{Z}^3\setminus\{\mathbf{0}\}.
\end{equation}
By classical results in Diophantine approximation, condition
\eqref{Diophantine0} holds for Lebesgue-almost every $\omega\in\mathbb{R}^3$
\cite{Chen-Zhang-Zhou}. We remark that studying the dynamics near a vector field satisfying the
Diophantine condition has been a common practice in ergodic theory and
dynamical systems (see, e.g., \cite{Cas, Koc, Lop}). In recent years,  the Diophantine condition is frequently employed in the study of MHD equations such as  the well-posedness \cite{Jiang-Jang1,Wu-Zhai} and stability \cite{Liss,RZZ}. Here, the key consequence is the following Poincar\'{e}-type
inequality with derivative loss (see Lemma \ref{Diophantine-inequality}):
for any zero-mean function $f$ on $\mathbb{T}^3$,
\begin{equation}\label{Poincare-Diophantine}
	\|f\|_{H^s(\mathbb{T}^3)} \leq C\,\|\omega\cdot\nabla f\|_{H^{s+r}(\mathbb{T}^3)}.
\end{equation}
This inequality is the main tool that converts the ``hidden'' dissipation
generated by the wave structure into quantitative Sobolev control of
$a$ and $\b$.

%% ------------------------------------------------------------------
\subsection{Related mathematical results}
%% ------------------------------------------------------------------
It is well known that,
the global well-posedness of the  following standard compressible MHD system
\eqref{MHD-begin} with full viscosity ($\mu>0, \mu+\lambda>0$) and resistivity
($\nu>0$)
\begin{eqnarray}\label{MHD-begin}
	\left\{\begin{aligned}
		&\partial_t \rho + {\mathrm{div}} (\rho \mathbf{u}) =0,  \\
		&\partial_{t}(\rho \u)+\Dv (\rho \u\otimes \u)-\mu\Delta \mathbf{u} - (\lambda+\mu) \nabla {\mathrm{div}} \mathbf{u}+\nabla P=\H\cdot \nabla \H-\frac{1}{2}\nabla|\mathbf{H}|^2,\\
		&\partial_t \mathbf{H}+\mathbf{u}\cdot\nabla\mathbf{H}-\mathbf{H}\cdot\nabla\mathbf{u}-\nu\Delta \mathbf{H}
		=-\mathbf{H}\div\mathbf{u},\\
		&{\mathrm{div}} \mathbf{H} =0,\\
		&(\rho,\u,\H)_{|t=0}=(\rho_0,\u_0,\H_0),
	\end{aligned}\right.
\end{eqnarray}
is by now well understood (see, e.g.,
\cite{DF,Gao-Wu-Xu,Hu-Wang1,Hu-Wang2,Kawashima,KawashimaS,K-L,L-X-Z,L-M-X,L-H,L-H1,Suen,SuenA,VH} and the references therein).

\medskip
When only one of viscosity or resistivity is present, the problem becomes
substantially harder.  For the viscous and non-resistive compressible MHD
($\mu>0, \mu+\lambda>0$,$\nu=0$), the global solutions in $\mathbb{R}^3$ for small
data remain open; on the torus, significant progress has been made in
\cite{Dong-Wu-Zhai,Wu-Wu,Wu-Zhai,Wu-Zhu,Li-Xu-Zhai,Jiu-Liu-Xie}, where
the background magnetic field and the Diophantine condition are
exploited to generate effective dissipation for the magnetic field.

\medskip
The stability of MHD equilibria near a background magnetic field has
attracted substantial attention in the incompressible setting. For the incompressible viscous and non-resistive MHD system
($\mu>0$, $\nu=0$, $\mathrm{div}\,\mathbf{u}=0$), Lin--Zhang
\cite{Lin-Zhang} and Lin--Xu--Zhang \cite{Lin-Zhang1} initiated the
systematic stability study in two and three space dimensions
respectively, proving global well-posedness for small perturbations of
a uniform background field.  Their work inspired a large body of further
investigations covering various partial dissipation settings (see, e.g.,
\cite{Chen-Zhang-Zhou, GBM, L-Zhang, Jiang-Jang1, PZZ,
	Ren-Wu-Xia-Zhang, Ren-Xiang-Zhang, Tan-Wang, Wu-Zhu1, Xie-Jiu-Liu, Xu-Zhang,
	 zhaijde, Zhou-Zhu} and the references therein.
For the incompressible ideal MHD ($\mu=0$, $\nu=0$, $\mathrm{div}\,\mathbf{u}=0$),
Bardos--Sulem--Sulem \cite{BSS} proved global existence in H\"older spaces
near a strong background field using the Els\"asser-variable approach,
and Cai \cite{Cai} recently extended this to the inhomogeneous
incompressible case.  In the incompressible setting with velocity damping,
Wu--Wu--Xu \cite{Wu-Wu-Xu} established global well-posedness for the 2D
system, Jiang--Jiang--Zhao \cite{JJZ} treated the 3D
initial-boundary-value problem in a horizontally periodic strip. Jiang--Jiang--Zhao \cite{JJZ1} studied the Rayleigh-Taylor instability  to the 2D inhomogeneous, incompressible, inviscid case with velocity damping  in a  horizontal slab domain.

\medskip
For the compressible inviscid resistive  MHD ($\mu=\lambda=0$),
Wu--Xu--Zhai \cite{Wu-Xu-Zhai} established global smooth solutions to the
non-isentropic equations on $\mathbb{T}^3$, and Li--Qiao \cite{Li-Qiao}
obtained the analogous result for the inviscid resistive isentropic
case.  Neither result covers the fully ideal ($\mu=\lambda=\nu=0$) isentropic
setting, which is the subject of the present paper.

%% ------------------------------------------------------------------
\subsection{Main result}
%% ------------------------------------------------------------------

We can now state the main theorem of this paper.

\begin{theorem}\label{dingli}
	Let $N>3r+3$ with $r>2$, and assume $\omega\in\mathbb{R}^3$ satisfy
	the Diophantine condition \eqref{Diophantine0} for some constant $c>0$.
	Suppose the initial data $(\rho_0-1, \mathbf{u}_0, \b_0)\in H^N(\mathbb{T}^3)$
	satisfies the mean conditions
	\begin{equation}\label{junzhi0}
		\int_{\mathbb{T}^3}\rho_0\,dx = 1, \qquad
		\int_{\mathbb{T}^3}\b_0\,dx = 0,
	\end{equation}
	and $\frac12\leq\rho_0\leq \frac32.$  Then there exists $\varepsilon>0$ such that if
	\begin{equation}\label{2026-6-27-3-1}
	\|\rho_0-1\|_{H^N} + \|\mathbf{u}_0\|_{H^N}
	+ \|\b_0\|_{H^N} \leq \varepsilon,
	\end{equation}
	the system \eqref{MHD1} admits a unique global solution
	$(\rho-1, \mathbf{u}, \b)\in C([0,\infty); H^N)$.
	Moreover, for any $t\geq 0$ and $r+1\leq\gamma< N$,
	\begin{equation}\label{decay}
		\|\rho(t)-1\|_{H^{\gamma}}
		+ \|\mathbf{u}(t)\|_{H^\gamma}
		+ \|\b(t)\|_{H^\gamma}
		\leq C(1+t)^{-\frac{N-\gamma}{2r+2}}.
	\end{equation}
\end{theorem}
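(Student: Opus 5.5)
We argue by a bootstrap (continuity) argument built on local well-posedness of the symmetric hyperbolic system \eqref{MHD1}. That local theory is standard for $N>5/2$ once $\rho$ stays in $[\frac14,\frac74]$.

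\textbf{Energy functionals.} We introduce the high-order energy
\[
E_N(t)=\sup_{0\le\tau\le t}\big(\|a\|_{H^N}^2+\|\u\|_{H^N}^2+\|\b\|_{H^N}^2\big)(\tau)
+\int_0^t\|\u\|_{H^N}^2\,d\tau ,
\]
together with a lower-order dissipative functional at regularity $\gamma$, with $r+1\le\gamma$. The a priori goal is $E_N(t)\le C E_N(0)+C E_N(t)^{3/2}$. Since the data are small, this closes the bootstrap and yields the global solution.

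\textbf{High-order estimate.} Apply $\partial^\alpha$ with $|\alpha|\le N$ to \eqref{MHD1}. Test the mass equation with $\beta\partial^\alpha a$ (symmetrized through $P'(\rho)/\rho$), the momentum equation with $\partial^\alpha\u$, and the induction equation with $\partial^\alpha\b$. The linear coupling terms $\omega\cdot\nabla\b-\nabla(\omega\cdot\b)$ and $\omega\cdot\nabla\u-\omega\,\mathrm{div}\,\u$ cancel exactly after integration by parts. The top-order nonlinear terms $\b\cdot\nabla\b$ against $\b\cdot\nabla\u$, and $\u\cdot\nabla$ transport, cancel or reduce to commutators, because the full ideal MHD system is symmetrizable. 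This gives
\[
\frac{d}{dt}\mathcal E_N+\|\u\|_{H^N}^2\lesssim \|\nabla(a,\u,\b)\|_{L^\infty}\,\|(a,\u,\b)\|_{H^N}^2 .
\]
The right-hand side is not integrable unless $\|\nabla(a,\b)\|_{L^\infty}$ decays in time. The whole argument therefore hinges on extracting dissipation for $a$ and $\b$ at low order.

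\textbf{Hidden dissipation at order $\gamma$.} At level $H^{s}$ with $s\le N-1$, we add the cross terms
\[
\delta_1\langle \u,\nabla a\rangle_{H^{s}}\quad\text{and}\quad -\delta_2\langle \u,\ \omega\cdot\nabla\b\rangle_{H^{s}} .
\]
Using the momentum equation, their time derivatives produce the good terms $\beta\|\nabla a\|_{H^s}^2$ and $\|\omega\cdot\nabla\b\|_{H^s}^2$. The contributions of $\omega\cdot\nabla\b$ in the first and of $\nabla(\omega\cdot\b)$ in the second are handled by combining the two with suitable weights. Alternatively, we use the $\mathrm{div}/\mathrm{curl}$ splitting of $\u$: the pressure couples to $\mathrm{div}\,\u$ and to $\nabla(\omega\cdot\b)$, while $\omega\cdot\nabla\b$ drives the rest. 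All remaining errors are bounded by $\|\u\|_{H^{s+1}}^2$, by $\|\partial_t\u\|$-type terms, or by the nonlinearities. Because $a$ and $\b$ have zero mean (by \eqref{junzhi0} and conservation), Lemma \ref{Diophantine-inequality} converts this into
\[
\|\b\|_{H^{s-r}}\lesssim\|\omega\cdot\nabla\b\|_{H^{s}},\qquad \|a\|_{H^{s}}\lesssim \|\nabla a\|_{H^s}.
\]
This yields an inequality $\frac{d}{dt}\mathcal E_\gamma+c\,\mathcal D_\gamma\le$ nonlinear terms, where $\mathcal D_\gamma$ controls $\|(a,\u,\b)\|_{H^{\gamma}}^2$ with $\gamma=s-r$, up to one derivative. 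Interpolating between $H^{\gamma}$ and $H^{N}$ gives
\[
\mathcal D_\gamma\gtrsim \mathcal E_\gamma^{1+\frac{1}{\theta}}\,E_N^{-\frac1\theta}\qquad\text{with }\theta\sim\frac{N-\gamma}{2r+2}-\text{type exponents}.
\]
Solving this ODE inequality gives \eqref{decay}. Since $N>3r+3$, the decay rate at $\gamma$ with $\|\nabla(a,\u,\b)\|_{L^\infty}\lesssim\|\cdot\|_{H^{\gamma}}$ and $\gamma\ge r+1>5/2$ exceeds $(1+t)^{-1}$. Hence the $L^\infty$ factor in the high-order estimate is integrable, and $E_N$ stays bounded.

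\textbf{Main obstacle.} The main obstacle is the hidden-dissipation step. One must handle the nonlinear terms appearing in the cross-term derivatives, notably $\langle \partial_t\u,\omega\cdot\nabla\b\rangle$, where $\partial_t\b$ is rewritten via the induction equation. This must be done without losing more than $r+1$ derivatives, while still bounding the result by $E_N^{1/2}\mathcal D$. The first thing to try is to put all losses on $\u$, which is damped at every order up to $N$, and to bound products by $\|(a,\b)\|_{H^{\gamma}}\|\u\|_{H^N}$.
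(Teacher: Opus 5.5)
Your architecture is the paper's: a top-order energy estimate in which only $\u$ is damped, a lower-order Lyapunov functional whose cross terms extract dissipation of $a$ and $\b$ through the Diophantine--Poincar\'e inequality, decay by interpolation, and Gronwall at $H^N$ with an integrable coefficient. However, the hidden-dissipation step fails as you set it up. When you differentiate $\delta_1\langle\u,\nabla a\rangle_{H^s}$ and insert the momentum equation, the term $\langle\omega\cdot\nabla\b,\nabla a\rangle_{H^s}$ vanishes identically. So does $\langle\nabla(\omega\cdot\b),\omega\cdot\nabla\b\rangle_{H^s}$ in the second cross term, both because $\div\b=0$. What survives is
\[
\big\langle \nabla\Lambda^s(\omega\cdot\b),\,\nabla\Lambda^s a\big\rangle_{L^2},
\]
and absorbing it requires control of $\|\nabla\Lambda^s(\omega\cdot\b)\|_{L^2}$, which nothing in your scheme provides. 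The second cross term only produces $\|(\omega\cdot\nabla)\Lambda^s\b\|_{L^2}^2$, which is blind to Fourier modes with $|\omega\cdot k|\ll|k|$, so no choice of $\delta_1,\delta_2$ helps. The paper instead projects the momentum equation onto $\omega$. Since $\omega\cdot(\omega\cdot\nabla\b)=(\omega\cdot\nabla)(\omega\cdot\b)$, the linear magnetic terms cancel and $\partial_t(\omega\cdot\u)+\beta\,\omega\cdot\nabla a+\omega\cdot\u=\omega\cdot f_2$. Applying $\Lambda^s$ and pairing with $(\omega\cdot\nabla)\Lambda^s a$ then gives $\beta\|(\omega\cdot\nabla)\Lambda^s a\|_{L^2}^2$; a cross term with the total pressure $\beta a+\omega\cdot\b$ would also work. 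Either way $a$ is recovered only in $H^{s-r}$, like $\b$, so the lossless bound $\|a\|_{H^s}\lesssim\|\nabla a\|_{H^s}$ you invoke is not available.

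The second gap is the derivative bookkeeping, which is exactly where $N>3r+3$ enters. A cross term at level $s$ generates the error $\|\u\|_{H^{s+1}}^2$, so it must sit inside $AE_{s+1}$, while the gain for $(a,\b)$ is only in $H^{s-r}$. Functional and dissipation therefore differ by $r+1$ derivatives, not by one. The paper takes $s=r$, so $\mathcal E\approx\|(a,\u,\b)\|_{H^{r+1}}^2$ dissipates only $\|(a,\u,\b)\|_{L^2}^2$. All nonlinear terms, including $\|f_j\|_{H^r}^2$ with $a\partial_t\u$ handled through the equation, are bounded by $\|(a,\u,\b)\|_{H^{r+1}}^3\lesssim\|(a,\u,\b)\|_{L^2}^2\|(a,\u,\b)\|_{H^N}$, which is valid for $N\ge 3r+3$. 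This resolves your ``main obstacle'' without placing losses on $\u$. Then $\frac{d}{dt}\mathcal E+c\,\mathcal E^{N/(N-r-1)}\le 0$ yields $\|(a,\u,\b)\|_{H^{r+1}}\lesssim(1+t)^{-\frac{N-r-1}{2r+2}}$, which is integrable exactly when $N>3r+3$. The rates for larger $\gamma$ come afterwards by interpolation with the bounded $H^N$ norm. If, as you propose, the dissipation sits at $H^\gamma$ with $\gamma\ge r+1$, the functional lives at $H^{\gamma+r+1}$. The same argument then gives only $(1+t)^{-\frac{N-\gamma-r-1}{2r+2}}$, which is integrable only for $N>\gamma+3r+3\ge 4r+4$. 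The exponent $\frac{N-\gamma}{2r+2}$ you quote is not what your functional produces.
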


\begin{remark}
	To the best of our knowledge, Theorem \ref{dingli} is the first
	global well-posedness result for the triple dimensional isentropic
	compressible inviscid, non-resistive MHD equations \eqref{MHD1111}.
	When $\b_0\equiv 0$, the system reduces to the compressible Euler
	equations with velocity damping \cite{STW,WY}, for which global smooth
	solutions near a constant state are known.  Our result thus demonstrates
	that the addition of a non-zero background magnetic field $\omega$ is
	compatible with and in fact enhances the stabilizing effect of damping,
	bringing the ideal MHD system to the same level of understanding as the
	compressible Euler equations.
\end{remark}

\begin{remark}
	Compared with \cite{Li-Qiao,Wang-Xin,Wu-Xu-Zhai}, the present paper is
	the first to treat the fully ideal (inviscid and non-resistive) isentropic
	compressible MHD system.
\end{remark}

\begin{remark}\label{remar3}
	The mean conditions \eqref{junzhi0} are preserved under the time
	evolution of \eqref{MHD1}: for all $t\geq 0$,
	\begin{equation}\label{junzhi1}
	\int_{\mathbb{T}^3}\rho\,dx = 1, \qquad
	\int_{\mathbb{T}^3}\b\,dx = 0.
	\end{equation}
	Indeed, integrating the density equation over $\mathbb{T}^3$ gives
	$\frac{d}{dt}\int\rho\,dx = 0$; integrating the induction equation
	and using $\mathrm{div}\,\b=0$ and $\omega\cdot\nabla(\cdot)$
	having zero mean on $\mathbb{T}^3$ gives $\frac{d}{dt}\int\b\,dx=0$.
	%and integrating the momentum equation gives $\frac{d}{dt}\int\rho\mathbf{u}\,dx = -\int\rho\mathbf{u}\,dx$, so the mean of $\rho\mathbf{u}$ decays exponentially from zero.
\end{remark}

%% ------------------------------------------------------------------
\subsection{Strategy of the proof}
%% ------------------------------------------------------------------

The proof of Theorem \ref{dingli} is based on a bootstrap argument in
which one propagates a global smallness bound on
$\sup_{t\geq 0}\|(a,\mathbf{u},\b)\|_{H^N}$ by deriving
time-uniform a priori estimates.  The main steps are as follows.

\medskip
The first step is to exploit hidden dissipation from the background
	field (Lemmas \ref{lemma-pri3-3-1}--\ref{lemma-pri3-3}). In fact,
the absence of diffusion in the density and magnetic field equations
means that the following  energy inequality (see Lemma \ref{lemma-pri1})
%\begin{equation}\label{basic-energy}\frac{d}{dt}\int\Bigl(\rho|\mathbf{u}|^2 + |\b|^2+ e(\rho)\Bigr)\,dx + \int\rho|\mathbf{u}|^2\,dx = 0\end{equation}(where $e(\rho)$ is the potential energy density)
\begin{equation}\begin{split}\label{2026-6-27-pri-2-2}
\frac{d}{dt}E_s+
\|(u,\Lambda^s\u)\|_{L^2}^2&\leq C(1+\|(a,\u,\b)\|_{H^3})\|(a,\u,\b)\|_{H^3}\|(a,\u,\b)\|^2_{H^s}\\&\quad+
\|a\|_{H^3}\|\b\|^{2}_{H^3}\|a\|_{H^s}\|\u\|_{H^s}
\end{split}\end{equation}
(where $E_s\approx\|(a,\u,\b)\|_{H^s}^2$) provides control only
on $\u$ and not on $a$ or $\b$. In order to close this energy, the greatest difficulty   is to achieve dissipation
of density and magnetic fields. Here,  the non-zero background magnetic field $\omega$ satisfying the Diophantine condition plays a key role.
On the one hand, to reveal hidden dissipation for the density $a$: we project the
momentum equation onto $\omega$ to isolate the term $\omega\cdot\nabla a$;
applying $\Lambda^s$ and testing against $\omega\cdot\nabla\Lambda^s a$
then yields
\begin{equation}\label{density-dissipation-sketch}
	\beta\|(\omega\cdot\nabla)\Lambda^s a\|_{L^2}^2
	+ \frac{d}{dt}\int\omega\cdot\nabla\Lambda^s a\cdot\Lambda^s(\mathbf{u}\cdot\omega)\,dx
	\lesssim \|\mathbf{u}\|_{H^{s+1}}^2 + \|(f_1,f_2)\|_{H^s}^2.
\end{equation}
The Poincar\'{e} inequality \eqref{Poincare-Diophantine} then converts
$\|(\omega\cdot\nabla)\Lambda^s a\|_{L^2}$ into $\|a\|_{H^{s-r}}$, at the cost
of $r$ derivatives. On the other hand,
to discover hidden dissipation for the magnetic field $\b$:
applying $\Lambda^s$ to the momentum equation and testing against
$(\omega\cdot\nabla)\Lambda^s\b$ yields
\begin{equation}\label{magnetic-dissipation-sketch}
	\|(\omega\cdot\nabla)\Lambda^s\b\|_{L^2}^2
	- \frac{d}{dt}\int\Lambda^s\mathbf{u}\cdot(\omega\cdot\nabla)\Lambda^s\b\,dx
	\lesssim \|\mathbf{u}\|_{H^{s+1}}^2+\|(f_2,f_3)\|_{H^s}^2.
\end{equation}
Again, \eqref{Poincare-Diophantine} converts this into $H^{s-r}$ control
of $\b$.  At this point, together with the dissipation of $u$ itself, this yields effective dissipation for $(a,u,\b)$ at the $H^{s-r}$  level.  %Summing over $s$ and iterating in the derivative index(Lemma \ref{lemma-pri2}) produces effective dissipation at the level of$H^{2r}$ for all three unknowns.

\medskip
The second step is to bound the nonlinear error terms.
The nonlinear terms $f_1, f_2, f_3$ in \eqref{nonlinear-terms} must be
estimated in high-order Sobolev norms.  Because no diffusion is available
for $a$ or $\b$, the standard techniques for viscous and/or
resistive MHD
\cite{Dong-Wu-Zhai,L-X-Z,Li-Xu-Zhai,L-H,L-H1,Suen,SuenA,Wu-Xu-Zhai,Wu-Zhai,Wu-Zhu}
are not directly applicable.  Instead, we use commutator estimates
(Kato--Ponce type, Lemma \ref{Hk-estimate-7-8}), integration by parts, and exploit
structural cancellations  by rearranging some nonlinear terms. For example, we here reconsider some combinations of the nonlinear terms  such as $\big|\int\Lambda^s(\b\cdot\nabla \b)\cdot\Lambda^s \u dx+\int\Lambda^s(\b\cdot\nabla \u)\cdot\Lambda^s\b dx\big|$ in the pair $(\mathrm I_5,\mathrm I_8)$, $\big|-\frac12\int\Lambda^s\nabla|\b|^2\cdot\Lambda^s\u dx-\int\Lambda^s(\b\textrm{div}\u)\cdot\Lambda^s\b dx\big|$ in the pair $(\mathrm I_6,\mathrm I_9)$ stated in Lemma \ref{lemma-pri1},  and  $\int(\omega\cdot\nabla)\Lambda^{s}\b\cdot\nabla\Lambda^{s}(\beta a+ \b\cdot\omega)\,dx=0$ stated in Lemma \ref{lemma-pri3-3}  that arise from the divergence-free constraint $\mathrm{div}\,\b=0$.

\medskip
The third step is to close the energy hierarchy. Our proof here mainly relies on a two-tier
energy method. Combining the  lower-order energy inequality \eqref{2026-6-27-pri-2-2} with $s=r+1$ (which provides dissipation for $\u$) with the hidden dissipation estimates from the first step (at order $r$),
%Combining the basic energy estimate \eqref{basic-energy} (which provides damping for $\mathbf{u}$) with the hidden dissipation estimates from the first step(at order $4r+2$) and the high-order energy inequality (at order $N$),
we can  define the  following Lyapunov functional
\[
{\mathcal{E}(t)}=AE_{r+1}(t)+\int \Big(\omega\cdot\nabla\Lambda^{r}a\cdot\Lambda^{r}(\u\cdot\omega)-\Lambda^{r}\u\cdot(\omega\cdot\nabla)\Lambda^{r}\b\Big) \,dx,
\]
for a sufficiently large constant $A>0$.  The cross terms are bounded by
Cauchy--Schwarz, so that
$\mathcal{E}(t)\approx\|(a,\mathbf{u},\b)\|_{H^{r+1}}^2$.
Under the bootstrap assumption
$\sup_{t\in[0,T]}\|(a,\mathbf{u},\b)\|_{H^N}\leq\delta$, \eqref{Poincare-Diophantine} and the interpolation inequality, one
derives
\[
\frac{d}{dt}\mathcal{E}(t) + c\|(a,\mathbf{u},\b)\|_{0}^2 \leq 0.
\]
An interpolation inequality then gives $\mathcal{E}(t)\lesssim (1+t)^{-(N-r-1)/(r+1)}$.
This decay of the lower-order energy is fed back into the highest-order
energy inequality \eqref{2026-6-27-pri-2-2} with $s=N$ to show via Gronwall's inequality that
$\|(a,\mathbf{u},\b)\|_{H^N}^2\leq C\varepsilon^2$, closing the
bootstrap when $\varepsilon$ is small enough.  The decay rate
\eqref{decay} follows by  the interpolation between the lower and
highest-order estimates.

\medskip
\noindent\textbf{Notations.}
We let $C$ denote a generic positive constant that may vary from line to
line and write $A\lesssim B$ to mean $A\leq CB$.  The spaces
$L^q(\mathbb{T}^3)$ and $H^k(\mathbb{T}^3)$ carry their standard norms
$\|\cdot\|_{L^q}$ and $\|\cdot\|_k$, with $H^0=L^2$.  We write
$[U,V]W = U(VW)-V(UW)$ for the commutator, and use the shorthand
$\int f\,dx = \int_{\mathbb{T}^3}f\,dx$,
$\|(f,g)\|_X^2 = \|f\|_X^2+\|g\|_X^2$,
$\|h(f,g)\|_X^2 = \|(hf,hg)\|_X^2$.

\medskip
The rest of this paper is organized as follows. Section \ref{sec2} collects the preliminary lemmas, chiefly the
Poincar\'e-type inequality from the Diophantine condition.
Section \ref{sec3} derives the time-uniform \emph{a priori} estimates.
Section \ref{sec:pro} completes the proof of Theorem \ref{dingli}.

\vskip .3in
\section{Preliminaries}
\label{sec2}

This section collects the several analytical tools used throughout the paper.
The first is a Poincar\'{e}-type inequality derived from the
Diophantine condition \eqref{Diophantine0}.

\begin{lemma}[Diophantine--Poincar\'{e} inequality, {\cite{Xie-Jiu-Liu}}]
	\label{Diophantine-inequality}
	Assume that $\omega\in\mathbb{R}^3$ satisfies the Diophantine condition
	\eqref{Diophantine0} with constants $c>0$ and $r>2$.  Let $\geq0$.
	Then for any function $f$ with $\nabla f\in H^{s+r}(\mathbb{T}^3)$ and
	$\int_{\mathbb{T}^3}f\,dx = 0$, one has
	\begin{equation}\label{Poincare-Dioph}
		\|f\|_{H^s(\mathbb{T}^3)} \leq C\,\|\omega\cdot\nabla\Lambda^{s+r} f\|_{L^2(\mathbb{T}^3)},
	\end{equation}
	where the constant $C$ depends only on $c$, $r$, $|\omega|$, and
	$\mathbb{T}^3$.
\end{lemma}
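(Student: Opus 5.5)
The proof goes through Fourier series on $\mathbb{T}^3$. Since both sides of \eqref{Poincare-Dioph} are $L^2$-based, Plancherel reduces the inequality to a pointwise comparison of Fourier multipliers over the lattice $\mathbb{Z}^3\setminus\{0\}$.

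Write $f(x)=\sum_{\mathbf{k}\in\mathbb{Z}^3}\hat f(\mathbf{k})e^{i\mathbf{k}\cdot x}$. The zero-mean condition $\int_{\mathbb{T}^3}f\,dx=0$ means $\hat f(\mathbf{0})=0$, so only $\mathbf{k}\neq\mathbf{0}$ contribute. The operators act by multiplication: $\Lambda^{s+r}$ by $|\mathbf{k}|^{s+r}$ and $\omega\cdot\nabla$ by $i(\omega\cdot\mathbf{k})$. Plancherel (up to the normalizing constant of $\mathbb{T}^3$) then gives
\[
\|\omega\cdot\nabla\Lambda^{s+r}f\|_{L^2}^2=\sum_{\mathbf{k}\neq\mathbf{0}}|\omega\cdot\mathbf{k}|^2|\mathbf{k}|^{2(s+r)}|\hat f(\mathbf{k})|^2 .
\]
The hypothesis $\nabla f\in H^{s+r}$ guarantees this sum is finite, since $|\omega\cdot\mathbf{k}|\le|\omega||\mathbf{k}|$.

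Next apply the Diophantine condition \eqref{Diophantine0}. For each $\mathbf{k}\neq\mathbf{0}$ we have $|\omega\cdot\mathbf{k}|^2\ge c^2|\mathbf{k}|^{-2r}$, hence
\[
|\omega\cdot\mathbf{k}|^2|\mathbf{k}|^{2(s+r)}\ge c^2|\mathbf{k}|^{2s}.
\]
On nonzero lattice points $|\mathbf{k}|\ge1$, so $(1+|\mathbf{k}|^2)^{s}\le 2^{s}|\mathbf{k}|^{2s}$. It follows that
\[
\|f\|_{H^s}^2\simeq\sum_{\mathbf{k}\neq\mathbf{0}}(1+|\mathbf{k}|^2)^s|\hat f(\mathbf{k})|^2\le 2^s c^{-2}\sum_{\mathbf{k}\neq\mathbf{0}}|\omega\cdot\mathbf{k}|^2|\mathbf{k}|^{2(s+r)}|\hat f(\mathbf{k})|^2 .
\]
Taking square roots gives \eqref{Poincare-Dioph} with $C=2^{s/2}c^{-1}$, up to the Plancherel normalization. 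This constant depends only on $c$, $s$ and the torus; $r$ and $|\omega|$ enter only through the Diophantine condition itself.

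The only delicate point is bookkeeping rather than analysis. The inequality as stated uses $\Lambda^{s+r}$ on the right, which is consistent with the derivative loss $r$ in \eqref{Diophantine0}: multiplying by $|\mathbf{k}|^{s+r}$ exactly compensates the lower bound $c|\mathbf{k}|^{-r}$. The statement is only meaningful for $s\ge0$, and the zero mode must be excluded. Both are ensured by the hypotheses, since the zero-mean condition kills $\hat f(\mathbf{0})$, which the right-hand side cannot see. No step is a genuine obstacle.
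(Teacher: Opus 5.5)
Your proof is correct and is the standard argument. The paper does not prove this lemma but only cites Xie--Jiu--Liu, and the usual proof is exactly this Plancherel reduction combined with the pointwise bound $|\omega\cdot\mathbf{k}|\,|\mathbf{k}|^{s+r}\ge c\,|\mathbf{k}|^{s}\ge c\,2^{-s/2}(1+|\mathbf{k}|^2)^{s/2}$ on $\mathbb{Z}^3\setminus\{\mathbf{0}\}$. Your observation that the constant depends on $s$ but not separately on $|\omega|$ is accurate; it only sharpens the list of dependencies given in the statement.
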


The inequality \eqref{Poincare-Dioph} should be understood as a
Poincar\'{e} inequality with derivative loss. It replaces the
	standard bound $\|f\|_{H^s}\lesssim\|\nabla f\|_{H^s}$ (which would
	require $\omega$ to be nonvanishing at every frequency) with a bound
	involving $r$ additional derivatives on the right-hand side, valid for
	almost every $\omega\in\mathbb{R}^3$.  The loss of $r$ derivatives is
	precisely what forces the high-regularity requirement $N>3r+3$ in
	Theorem~\ref{dingli}.

\medskip
The following three lemmas provide the tools needed to estimate the
nonlinear terms.  In each case the periodic torus $\mathbb{T}^3$ replaces
$\mathbb{R}^3$, and the estimates hold with the same constants as their
classical counterparts.

\begin{lemma}[Gagliardo--Nirenberg inequality on $\mathbb{T}^3$,
	{\cite{Nirenberg-1959}}]
	\label{lem:GN}
	Let $0\leq m, s\leq\kappa$.  Then
	\[
	\|\Lambda^s f\|_{L^p}
	\leq C\,\|\Lambda^m f\|_{L^q}^{1-\theta}\,\|\Lambda^\kappa f\|_{L^r}^\theta,
	\]
	where $\theta\in[0,1]$ satisfies
	$\frac{s}{3}-\frac{1}{p}=\bigl(\frac{m}{3}-\frac{1}{q}\bigr)(1-\theta)
	+\bigl(\frac{\kappa}{3}-\frac{1}{r}\bigr)\theta$.
	When $p=\infty$, one requires $\theta\in (0,1)$.
\end{lemma}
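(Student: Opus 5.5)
The plan is to prove the inequality directly on $\mathbb{T}^3$ by a Littlewood--Paley argument. This mirrors the Euclidean proof and avoids any extension or periodization step. Throughout I read $\Lambda^\sigma$ as the Fourier multiplier $|\mathbf{k}|^\sigma$ acting on functions with $\int_{\mathbb{T}^3}f\,dx=0$, which is how the inequality is used in the paper. If $\Lambda=(1-\Delta)^{1/2}$ is intended instead, the same argument works with $|\mathbf{k}|$ replaced by $\langle\mathbf{k}\rangle$, and the zero mode is absorbed trivially. I would first treat the generic case $1<p,q,r<\infty$ and then return to the endpoint cases, in particular $p=\infty$, which is the one actually used (for $L^\infty$ bounds on $\nabla\mathbf{u},\nabla\b$, etc.).

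\textbf{Step 1: Bernstein estimates.} Introduce dyadic blocks $\Delta_j$, $j\geq 0$, localizing to $|\mathbf{k}|\sim 2^j$. This is a smooth partition of $\mathbb{Z}^3\setminus\{0\}$, obtained by restricting a Euclidean partition of unity to the lattice. Record two Bernstein inequalities on the torus:
\begin{align*}
\|\Lambda^\sigma\Delta_j f\|_{L^p}&\lesssim 2^{j\sigma}\|\Delta_j f\|_{L^p},\\
\|\Delta_j f\|_{L^p}&\lesssim 2^{3j(1/q-1/p)}\|\Delta_j f\|_{L^q}\quad (q\le p).
\end{align*}
Both follow from the kernels of the localized multipliers. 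By the Poisson summation formula these kernels are periodizations of Schwartz kernels, so their $L^1(\mathbb{T}^3)$ norms are uniformly bounded in $j$. I would also record the reverse bound $\|\Delta_j f\|_{L^q}\lesssim 2^{-jm}\|\Lambda^m\Delta_j f\|_{L^q}$ on annuli, together with uniform $L^q$ boundedness of $\Delta_j$.

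\textbf{Step 2: splitting and optimization.} Write $\Lambda^s f=\sum_{j\le J}\Lambda^s\Delta_j f+\sum_{j>J}\Lambda^s\Delta_j f$.

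For the low frequencies, combine Bernstein with the reverse bound. This estimates each block by $2^{j(s-m+3/q-3/p)}\|\Lambda^m f\|_{L^q}$.

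For the high frequencies, I proceed similarly and estimate each block by $2^{j(s-\kappa+3/r-3/p)}\|\Lambda^\kappa f\|_{L^r}$.

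The scaling relation on $\theta$ with $\theta\in(0,1)$ forces the two exponents to have opposite signs: $s-m+3/q-3/p=\theta(\kappa-m+3/q-3/r)>0$, and $s-\kappa+3/r-3/p=-(1-\theta)(\kappa-m+3/q-3/r)<0$. Consequently the two geometric sums are dominated by their $j=J$ terms. Choosing $2^{J(\kappa-m+3/q-3/r)}\approx\|\Lambda^m f\|_{L^q}/\|\Lambda^\kappa f\|_{L^r}$ balances the two pieces and yields $\|\Lambda^m f\|_{L^q}^{1-\theta}\|\Lambda^\kappa f\|_{L^r}^\theta$. If this ratio is $\lesssim1$, take $J=0$. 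In that case the low-frequency piece is a single block, and since $|\mathbf{k}|\geq1$ it is controlled by the high-frequency bound. This is the one place where the torus differs from $\mathbb{R}^3$ and the mean-zero assumption is used.

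\textbf{Step 3: endpoints, and the main obstacle.} The endpoint cases are the real difficulty.

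For $\theta=0$ or $\theta=1$, the estimate reduces to Sobolev embedding or the identity, and these follow from the same block estimates by summing with $\ell^2$ Littlewood--Paley square functions.

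For $p=\infty$ with $\theta\in(0,1)$, the geometric-sum argument above applies verbatim, because strict positivity and negativity of the two exponents is exactly what $\theta\in(0,1)$ guarantees. This is why the statement excludes $\theta\in\{0,1\}$ there.

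The remaining obstacle is the case $q=1$ or $r=1$ (or $=\infty$) when $\theta=0$ or $1$, where the Littlewood--Paley square-function theorem fails. I expect this case to be unnecessary for the paper, since all applications use $L^2$-based norms on the right. Should it be needed, I would instead transfer the known $\mathbb{R}^3$ inequality of Nirenberg: multiply by a smooth cutoff, periodize, and control the commutator by lower-order terms, using the mean-zero Poincar\'e bound.
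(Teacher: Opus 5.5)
The paper gives no proof of this lemma; it only cites Nirenberg. Your Littlewood--Paley argument could therefore serve as a self-contained replacement. As written, however, it has a genuine gap: it silently relies on restrictions the statement does not impose.

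\textbf{First restriction: $q\le p$ and $r\le p$.} Your Bernstein step $\|\Delta_j f\|_{L^p}\lesssim 2^{3j(1/q-1/p)}\|\Delta_j f\|_{L^q}$, and its analogue with $r$, needs $q\le p$ and $r\le p$. For $q>p$ the block bound with exponent $s-m+3/q-3/p$ is false. A plane wave $e^{i\mathbf{k}\cdot x}$ with $|\mathbf{k}|\sim 2^j$ has $L^p$ and $L^q$ norms independent of $\mathbf{k}$, so no factor $2^{-3j(1/p-1/q)}$ can be gained. Genuine Gagliardo--Nirenberg inequalities live in this regime. An example is $\|\nabla f\|_{L^4}^2\lesssim\|f\|_{L^\infty}\|\nabla^2 f\|_{L^2}$, with $q=\infty>p=4$ and $\theta=\frac12$. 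Summing your blocks only yields $\|\nabla f\|_{L^4}\lesssim\|f\|_{L^\infty}^{1/5}\|\nabla^2 f\|_{L^2}^{4/5}$; the sharp bound needs integration by parts or a genuinely bilinear argument. The same hidden assumption sits behind your claim that $\theta\in\{0,1\}$ ``reduces to Sobolev embedding''. For $\theta=0$ and $q>p$ the relation forces $s>m$, and the resulting estimate, e.g.\ $\|\Lambda f\|_{L^3}\lesssim\|f\|_{L^\infty}$, is false.

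\textbf{Second restriction: the sign of the exponents.} The relation gives your two exponents as $\theta D$ and $-(1-\theta)D$, with $D:=\kappa-m+\frac3q-\frac3r$. So $\theta\in(0,1)$ makes them of opposite sign only if $D\neq0$, and which one is positive depends on the sign of $D$. The hypotheses do not fix that sign: $D<0$ occurs, e.g., for $s=0$, $m=\kappa=1$, $p=q=3$, $r=1$, $\theta=\frac12$. For $D<0$ you must swap the roles of the two norms in the low/high splitting. For $D=0$ both exponents vanish, the geometric sums turn into $\sum_{j\le J}1$ and $\sum_{j>J}1$, and the argument collapses.

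The case $D=0$ cannot be repaired, because the lemma as written is false there. Take $p=\infty$, $s=0$, $m=\kappa=\frac32$, $q=r=2$. Every $\theta\in(0,1)$ satisfies the relation, and the claim becomes $\|f\|_{L^\infty}\lesssim\|\Lambda^{3/2}f\|_{L^2}$. This fails for $f_N=\sum_{1\le|\mathbf{k}|\le N}|\mathbf{k}|^{-3}e^{i\mathbf{k}\cdot x}$: the left side is $\sim\log N$ and the right side is $\sim(\log N)^{1/2}$. So your sentence that strict signs are ``exactly what $\theta\in(0,1)$ guarantees'' is precisely where the proof breaks.

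\textbf{What your argument does prove.} Once you add $1\le q,r\le p\le\infty$ and $D>0$, it gives a correct mean-zero torus version for $\theta\in(0,1)$, including $p=\infty$. That covers every use in the paper: the $L^2$-based interpolation and $\|\cdot\|_{L^\infty}\lesssim\|\cdot\|_2$. You should state the lemma with these restrictions rather than try to prove it as written.

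One smaller slip: balancing requires $2^{JD}\approx\|\Lambda^\kappa f\|_{L^r}/\|\Lambda^m f\|_{L^q}$, the reciprocal of what you wrote. Consequently the case that needs $|\mathbf{k}|\geq1$ is $\|\Lambda^\kappa f\|_{L^r}\lesssim\|\Lambda^m f\|_{L^q}$, not the reverse.
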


\begin{lemma}[Kato--Ponce product estimate, {\cite{kato}}]
	\label{Hk-estimate}
	Let $s\geq 0$.  For any
	$f,g\in H^s(\mathbb{T}^3)\cap L^\infty(\mathbb{T}^3)$,
	\begin{equation}\label{KP-product}
		\|fg\|_{H^s}
		\leq C\bigl(\|f\|_{L^\infty}\|g\|_{H^s}
		+ \|g\|_{L^\infty}\|f\|_{H^s}\bigr).
	\end{equation}
\end{lemma}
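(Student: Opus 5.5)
The plan is to prove the product estimate \eqref{KP-product} on the Fourier side with a Littlewood--Paley (Bony paraproduct) decomposition. On $\mathbb{T}^3$ the Fourier series replaces the Fourier transform, and the dyadic blocks $\Delta_j$ are defined by restricting the frequencies $\mathbf{k}\in\mathbb{Z}^3$ to annuli $|\mathbf{k}|\sim 2^j$. The low block $\Delta_{-1}$ contains the zero mode, and we write $S_j=\sum_{j'<j}\Delta_{j'}$.

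We then split
\[
fg = T_f g + T_g f + R(f,g),
\]
where
\[
T_f g=\sum_j S_{j-1}f\,\Delta_j g, \qquad R(f,g)=\sum_{|j-j'|\le 1}\Delta_j f\,\Delta_{j'}g .
\]

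\textbf{Paraproducts.} Each summand $S_{j-1}f\,\Delta_j g$ has frequencies in an annulus of size $2^j$, and it satisfies
\[
\|S_{j-1}f\,\Delta_jg\|_{L^2}\le \|S_{j-1}f\|_{L^\infty}\|\Delta_j g\|_{L^2}\lesssim \|f\|_{L^\infty}\|\Delta_jg\|_{L^2}.
\]
The bound on $S_{j-1}f$ holds because $S_{j-1}$ is convolution with an $L^1$-bounded periodized kernel. Almost orthogonality of the annuli then gives
\[
\|T_fg\|_{H^s}^2\lesssim \sum_j 2^{2js}\|f\|_{L^\infty}^2\|\Delta_jg\|_{L^2}^2\lesssim \|f\|_{L^\infty}^2\|g\|_{H^s}^2 .
\]
By symmetry, $\|T_gf\|_{H^s}\lesssim\|g\|_{L^\infty}\|f\|_{H^s}$. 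The torus introduces no new difficulty here: the periodized kernels of $S_j$ have uniformly bounded $L^1(\mathbb{T}^3)$ norms, which is what the claim about constants refers to.

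\textbf{Remainder.} This is the step needing care, because the terms $\Delta_jf\,\Delta_{j'}g$ are supported only in balls of radius $\sim 2^j$, not in annuli. Applying $\Delta_q$, only indices $j\ge q-3$ contribute. Hence
\[
2^{qs}\|\Delta_q R\|_{L^2}\lesssim \|f\|_{L^\infty}\sum_{j\ge q-3}2^{(q-j)s}\,2^{js}\|\Delta_{j}g\|_{L^2}.
\]
For $s>0$ the kernel $2^{(q-j)s}\mathbf{1}_{j\ge q-3}$ is summable, so Young's inequality for series yields $\|R\|_{H^s}\lesssim\|f\|_{L^\infty}\|g\|_{H^s}$.

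The case $s=0$ is handled separately and trivially, since $\|fg\|_{L^2}\le\|f\|_{L^\infty}\|g\|_{L^2}$. This remainder summation, which requires $s>0$, is the only genuine obstacle.

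Summing the three pieces gives \eqref{KP-product}. Alternatively, one may simply cite \cite{kato} and transfer the result from $\mathbb{R}^3$ by this periodic Littlewood--Paley argument.
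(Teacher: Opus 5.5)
Your argument is correct, and it differs from the paper's route because the paper gives no proof at all. It cites Kato--Ponce \cite{kato}, whose estimate is stated on $\mathbb{R}^n$ in $L^p$-based Bessel potential spaces and proved with Coifman--Meyer type bilinear multiplier techniques. The paper then simply asserts that the torus version holds with the same constants.

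Your Bony decomposition works directly on $\mathbb{T}^3$, so it supplies the transfer the paper takes for granted. The only periodic input is that the kernel of $S_j$ is (up to normalization) the periodization of $2^{3j}\check{\chi}(2^j\cdot)$, by Poisson summation. Hence its $L^1(\mathbb{T}^3)$ norm is at most $\|\check{\chi}\|_{L^1(\mathbb{R}^3)}$ uniformly in $j$, and the Fourier-support bookkeeping on $\mathbb{Z}^3$ is the same as on $\mathbb{R}^3$. Your proof also shows exactly where $s>0$ is used: only in summing the remainder, since the paraproduct bounds hold for every real $s$. Together with the trivial case $s=0$, this covers the full range $s\geq 0$ of the statement. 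One small correction: in the remainder the factor should be $\|\Delta_{j'}g\|_{L^2}$ with $|j-j'|\le 1$, a slightly fattened block, but this changes nothing.

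What the citation buys instead is generality. It covers $L^p$-based spaces with $1<p<\infty$ and the companion commutator estimate of Lemma \ref{Hk-estimate-7-8}, which the paper also relies on and which your argument does not address. For the $L^2$ product estimate stated here, though, your self-contained proof is all that is needed.
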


\begin{lemma}[Kato--Ponce commutator estimate, {\cite{kato}}]
	\label{Hk-estimate-7-8}
	Let $s>0$.  For any
	$f\in H^s(\mathbb{T}^3)\cap W^{1,\infty}(\mathbb{T}^3)$ and
	$g\in H^{s-1}(\mathbb{T}^3)\cap L^\infty(\mathbb{T}^3)$,
	\begin{equation}\label{KP-commutator}
		\|[\Lambda^s, f\cdot\nabla]g\|_{L^2}
		\leq C\bigl(\|\nabla f\|_{L^\infty}\|\Lambda^s g\|_{L^2}
		+ \|\Lambda^s f\|_{L^2}\|\nabla g\|_{L^\infty}\bigr).
	\end{equation}
\end{lemma}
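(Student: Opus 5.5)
The estimate \eqref{KP-commutator} is classical, so the plan is to adapt the Kato--Ponce argument to the torus. The cleanest way is to work on the Fourier side and reduce everything to multiplier bounds on the lattice $\mathbb{Z}^3$.

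Write $f\cdot\nabla g=\sum_j f_j\partial_j g$. Then
\[
\widehat{[\Lambda^s,f\cdot\nabla]g}(\mathbf{k})=\sum_{\mathbf{k}=\mathbf{l}+\mathbf{m}}\bigl(\langle\mathbf{k}\rangle^s-\langle\mathbf{m}\rangle^s\bigr)\,\hat f_j(\mathbf{l})\,i m_j\,\hat g(\mathbf{m}),
\]
where $\langle\cdot\rangle$ is the symbol of $\Lambda$. I would split the sum with a Littlewood--Paley (Bony) paraproduct decomposition into three regions: $|\mathbf{l}|\ll|\mathbf{m}|$ (low frequencies of $f$, high of $g$), $|\mathbf{m}|\ll|\mathbf{l}|$ (high of $f$, low of $g$), and $|\mathbf{l}|\sim|\mathbf{m}|$.

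\textbf{Low-$f$/high-$g$ region.} By the mean value theorem,
\[
\langle\mathbf{k}\rangle^s-\langle\mathbf{m}\rangle^s=\int_0^1 s\,\langle\mathbf{m}+\tau\mathbf{l}\rangle^{s-2}(\mathbf{m}+\tau\mathbf{l})\cdot\mathbf{l}\,d\tau .
\]
This converts the term into $\nabla f$ times a symbol of order $s-1$ acting on $\nabla g$, that is, effectively $\Lambda^s g$. The key step is to show that the resulting bilinear symbol is a Coifman--Meyer multiplier. I would do this blockwise on dyadic shells, expanding the symbol in a Fourier series on each shell, which gives a rapidly convergent sum of products. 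This region yields $\|\nabla f\|_{L^\infty}\|\Lambda^s g\|_{L^2}$.

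\textbf{High-$f$/low-$g$ region.} Here I bound the two pieces $\Lambda^s(f\cdot\nabla g)$ and $f\cdot\nabla\Lambda^s g$ separately. Since $\langle\mathbf{k}\rangle\sim\langle\mathbf{l}\rangle$ and $|\mathbf{m}|\ll|\mathbf{l}|$, each piece is controlled by $\|\Lambda^s f\|_{L^2}\|\nabla g\|_{L^\infty}$ using the $L^2$-orthogonality of the dyadic pieces.

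\textbf{Diagonal region $|\mathbf{l}|\sim|\mathbf{m}|$.} Write $m_j\hat g(\mathbf{m})$ as $\widehat{\partial_j g}$. Summing the dyadic blocks and using $s>0$ to make the geometric series converge (output frequency $\lesssim 2^q$), I obtain the bound $\|\Lambda^s f\|_{L^2}\|\nabla g\|_{L^\infty}$.

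An alternative route is transference. Rather than redoing the harmonic analysis, I would cite the $\mathbb{R}^3$ result of Kato--Ponce together with the periodic transference principle for bilinear Fourier multipliers, since the symbols involved are continuous. The nonzero frequency $\mathbf{k}=0$ and the mean of $f$ need no special care: $\langle 0\rangle=1$, and constants commute with $\Lambda^s$.

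The main obstacle is the symbol estimate in the low-$f$/high-$g$ region. Verifying Coifman--Meyer bounds uniformly on the lattice, rather than on $\mathbb{R}^3$, is where the care goes. I would handle it either by transference or by the Fourier-series expansion on dyadic annuli.
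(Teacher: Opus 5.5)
\textbf{There is a genuine gap: the diagonal region.} Your two off-diagonal regions are fine. The low-$f$/high-$g$ region does not even need Coifman--Meyer theory: the kernel bound $\|[\Lambda^s\tilde\Delta_j,S_{j-1}f]\Delta_j\nabla g\|_{L^2}\lesssim 2^{j(s-1)}\|\nabla f\|_{L^\infty}\|\Delta_j\nabla g\|_{L^2}$, plus almost orthogonality of the outputs, suffices. So the real difficulty is in the diagonal region $|\mathbf l|\sim|\mathbf m|$, not where you placed it.

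There the symbol $\langle\mathbf k\rangle^s-\langle\mathbf m\rangle^s$ splits into two pieces that behave differently.
\begin{itemize}
\item The piece $\Lambda^s\sum_{|j-j'|\le 1}\Delta_jf\,\Delta_{j'}\nabla g$ carries the weight $2^{qs}$ of the output frequency $2^q\lesssim 2^j$. Your series $\sum_{j\ge q-3}2^{(q-j)s}(\cdots)$ therefore converges for $s>0$ and gives $\|\Lambda^sf\|_{L^2}\|\nabla g\|_{L^\infty}$.
\item The piece $\sum_{|j-j'|\le 1}\Delta_jf\,\Delta_{j'}\nabla\Lambda^sg$ carries the weight $\langle\mathbf m\rangle^s\sim 2^{js}$, which does not depend on $q$. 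Blockwise bounds only give $\|\Delta_q(\cdot)\|_{L^2}\lesssim\sum_{j\ge q-3}c_j$ with $c_j=2^{js}\|\Delta_jf\|_{L^2}\|\nabla g\|_{L^\infty}$. But $\sum_q(\sum_{j\ge q}c_j)^2$ is not controlled by $\sum_jc_j^2$: you lose a logarithm of the frequency.
\end{itemize}
Equivalently, $R(f,\nabla\Lambda^sg)$ is a Bony remainder of total regularity $0$, where the remainder estimate does not land in $L^2$. Putting the norms the other way round, as $\|\nabla f\|_{L^\infty}\|\Lambda^sg\|_{L^2}$, meets the same obstruction.

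\textbf{What is missing.} You need a bound for this resonant sum that uses $L^\infty$ (really $\mathrm{BMO}$) across scales, not scale by scale. This is exactly where Kato--Ponce need the Coifman--Meyer theorem. Schematically, write $\Delta_jf=2^{-j}P_j\nabla f$ and $\Delta_{j'}\nabla\Lambda^sg=2^{j}Q_j\Lambda^sg$, with $P_j,Q_j$ uniformly bounded multipliers localized at $|\xi|\sim 2^j$. Each product has frequencies $\lesssim 2^{j+2}$, so for $H\in L^2$
\[
\Big|\sum_j\int P_j\nabla f\,Q_j\Lambda^sg\,\overline{H}\,dx\Big|\le\Big(\sum_j\|Q_j\Lambda^sg\|_{L^2}^2\Big)^{1/2}\Big(\sum_j\int|P_j\nabla f|^2\,|S_{j+3}H|^2\,dx\Big)^{1/2}.
\]
The last factor is $\lesssim\|\nabla f\|_{L^\infty}\|H\|_{L^2}$ by Carleson's embedding theorem, because $\sum_j|P_jF|^2\,dx\,\delta_{t=2^{-j}}$ is a Carleson measure for $F\in\mathrm{BMO}\supset L^\infty$.

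\textbf{Comparison with the paper.} The paper gives no proof and simply quotes Kato--Ponce, so your transference fallback is the route closest to it. It is not automatic, though. The estimate is not a single bilinear multiplier bound between fixed Lebesgue spaces. You would first have to write the commutator as $T_1(\nabla f,\Lambda^sg)+T_2(\Lambda^sf,\nabla g)$, with bounded symbols continuous at the lattice points and one input in $L^\infty$.

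Finally, the paper's $\Lambda=\sqrt{-\Delta}$ has symbol $|\mathbf k|$, so $\langle 0\rangle=1$ is the wrong convention. This is harmless, since the zero mode of the commutator is a multiple of $\int(\mathrm{div}\,f)\,\Lambda^sg\,dx$.
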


\begin{lemma}[Sobolev estimate for compositions, {\cite{Triebel}}]
	\label{Hk-estimate-1}
	Let $s>0$, $f\in H^s(\mathbb{T}^3)\cap L^\infty(\mathbb{T}^3)$, and
	let $F\in C^\infty(\mathbb{R})$ with $F(0)=0$.  Then
	\begin{equation}\label{composition}
		\|F(f)\|_{H^s}
		\leq C\bigl(1+\|f\|_{L^\infty}\bigr)^{[s]+1}\|f\|_{H^s},
	\end{equation}
	where the constant $C$ depends on
	$\sup_{k\leq[s]+2,\,|t|\leq\|f\|_{L^\infty}}|F^{(k)}(t)|$.
\end{lemma}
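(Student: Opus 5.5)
The plan is to prove the estimate first for integer $s=k\geq 1$ by the chain rule and Gagliardo--Nirenberg interpolation, and then to pass to non-integer $s$ with a difference-quotient (Slobodeckij) argument.

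\textbf{The $L^2$ part.} Since $F(0)=0$, the mean value theorem gives $|F(f(x))|\leq \sup_{|t|\leq\|f\|_{L^\infty}}|F'(t)|\,|f(x)|$. Hence $\|F(f)\|_{L^2}\leq C\|f\|_{L^2}$. It then remains to control the top-order homogeneous part $\|\Lambda^s F(f)\|_{L^2}$, because on $\mathbb{T}^3$ the $H^s$ norm is equivalent to $\|\cdot\|_{L^2}+\|\Lambda^s\cdot\|_{L^2}$.

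\textbf{Integer case $s=k$.} For a multi-index $|\alpha|=k$, the Fa\`a di Bruno formula writes $\partial^\alpha F(f)$ as a finite sum of terms
\[
F^{(j)}(f)\,\partial^{\beta_1}f\cdots\partial^{\beta_j}f,\qquad 1\leq j\leq k,\quad |\beta_i|\geq1,\quad \textstyle\sum_i|\beta_i|=k.
\]
We bound $|F^{(j)}(f)|$ by the supremum of $F^{(j)}$ on $[-\|f\|_{L^\infty},\|f\|_{L^\infty}]$. We apply H\"older with exponents $p_i=2k/|\beta_i|$, so that $\sum 1/p_i=1/2$. Lemma~\ref{lem:GN} (applied to $\nabla f$, or to $f$ minus its mean, which leaves all derivatives unchanged) gives
\[
\|\partial^{\beta_i}f\|_{L^{2k/|\beta_i|}}\leq C\|f\|_{L^\infty}^{1-|\beta_i|/k}\|\Lambda^k f\|_{L^2}^{|\beta_i|/k}.
\]
Multiplying these bounds yields $C\|f\|_{L^\infty}^{j-1}\|f\|_{H^k}$. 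Summing over $j\leq k$ then gives the bound $C(1+\|f\|_{L^\infty})^{k-1}\|f\|_{H^k}$, which is stronger than what is claimed.

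\textbf{Non-integer case $s=k+\sigma$, $\sigma\in(0,1)$.} We use the Gagliardo seminorm
\[
[g]_{\dot H^\sigma}^2=\iint|g(x)-g(y)|^2|x-y|^{-3-2\sigma}\,dx\,dy,
\]
applied to $g=\partial^\alpha F(f)$ with $|\alpha|=k$ (for $k=0$, $g=F(f)$ and Lipschitz continuity of $F$ suffices). For each Fa\`a di Bruno term we take the difference between $x$ and $y$ and telescope it, so that in each resulting piece exactly one factor is differenced. There are two kinds of pieces.
\begin{itemize}
\item The difference falls on $F^{(j)}(f)$. We use $|F^{(j)}(f(x))-F^{(j)}(f(y))|\leq\sup|F^{(j+1)}|\,|f(x)-f(y)|$. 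This is why derivatives of $F$ up to order $[s]+2$ enter the constant, and why one additional power of $(1+\|f\|_{L^\infty})$ appears, giving the exponent $[s]+1$.
\item The difference falls on some $\partial^{\beta_i}f$. This produces a fractional seminorm of $\partial^{\beta_i}f$ in $L^{p}$.
\end{itemize}
In both cases we apply H\"older and fractional Gagliardo--Nirenberg interpolation between $L^\infty$ and $\dot H^s$, now with exponents $p_i=2s/|\beta_i|$ and $\dot W^{\sigma,2s/(|\beta_i|+\sigma)}$. The homogeneity count again yields $\|f\|_{L^\infty}^{j}\|f\|_{H^s}$.

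\textbf{Expected obstacle.} The main difficulty is this fractional step. One needs the fractional Gagliardo--Nirenberg inequality $\|f\|_{\dot W^{\theta,s/\theta\cdot 2}}\lesssim\|f\|_{L^\infty}^{1-\theta/s}\|f\|_{\dot H^s}^{\theta/s}$ on $\mathbb{T}^3$, and the piece where the difference falls on $F^{(j)}(f)$ has to be placed in the right mixed norm. If this becomes too cumbersome, the fallback is a Littlewood--Paley (Meyer) argument. One writes $F(f)=\sum_q\big(F(S_{q+1}f)-F(S_qf)\big)=\sum_q m_q\,\Delta_q f$ with $m_q=\int_0^1F'(S_qf+\tau\Delta_qf)\,d\tau$. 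Bernstein's inequality gives $\|\partial^\gamma m_q\|_{L^\infty}\lesssim 2^{q|\gamma|}(1+\|f\|_{L^\infty})^{|\gamma|}$ for $|\gamma|\leq[s]+1$. Summing the dyadic pieces then recovers the $H^s$ bound with the stated power of $(1+\|f\|_{L^\infty})$.
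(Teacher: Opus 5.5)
The paper gives no proof of this lemma; it quotes it from Triebel. The precise form stated, with derivatives of $F$ up to order $[s]+2$ and the power $(1+\|f\|_{L^\infty})^{[s]+1}$, is exactly what Meyer's first linearization produces. That is your Littlewood--Paley fallback, and it is the standard proof (e.g. Bahouri--Chemin--Danchin).

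Your main route is genuinely different: a physical-space argument via Fa\`a di Bruno, H\"older and Gagliardo--Nirenberg, with difference quotients for the fractional part. It is sound. The $L^2$ bound, the integer case and the case $0<s<1$ are complete as written.

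The route is in fact sharper than the statement. It only uses $F^{(k)}$ with $k\le[s]+1$, evaluated on $[-\|f\|_{L^\infty},\|f\|_{L^\infty}]$ by the mean value theorem, and it gives the power $[s]$. Your attribution of $[s]+2$ and $[s]+1$ to the piece where the difference falls on $F^{(j)}(f)$ is therefore off by one, but this is harmless.

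Two points in the fractional step need to be made precise.

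First, after integrating in $y$ and applying H\"older in $x$, the object you get is Strichartz's square function $\big(\int|g(x)-g(y)|^2|x-y|^{-3-2\sigma}dy\big)^{1/2}$. Its $L^q$ norm is the Bessel-potential norm $\|\Lambda^\sigma g\|_{L^q}$ (i.e. $\dot F^{\sigma}_{q,2}$), not the Slobodeckij norm, which is weaker for $q>2$. So the fractional Gagliardo--Nirenberg inequality you invoke must be the Bessel-potential version. That version does hold, e.g. by splitting the Littlewood--Paley sum pointwise at an $x$-dependent frequency.

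Second, once $[s]\ge 3$ the telescoped terms can leave undifferenced derivative factors at both $x$ and $y$. With only one such factor you can swap $x$ and $y$. In general, apply H\"older with respect to the symmetric measure $|C(x)-C(y)|^2|x-y|^{-3-2\sigma}dx\,dy$, where $C$ is the differenced factor. This separates the $x$- and $y$-factors and reduces each piece to the same-point case.

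What the routes buy: your argument matches the stated dependence on $\sup_{|t|\le\|f\|_{L^\infty}}|F^{(k)}(t)|$ exactly, but it relies on Strichartz's characterization and fractional interpolation as black boxes. Meyer's argument needs only Bernstein's inequality and a dyadic summation lemma, and treats all $s>0$ (and all Besov scales) uniformly. Its one wrinkle is that $S_qf+\tau\Delta_qf$ is bounded only by $C_0\|f\|_{L^\infty}$ with $C_0>1$. To keep the constant depending on $F$ only over $[-\|f\|_{L^\infty},\|f\|_{L^\infty}]$, first replace $F$ by a $C^{[s]+2}$ extension of $F|_{[-\|f\|_{L^\infty},\|f\|_{L^\infty}]}$ that still vanishes at $0$; this does not change $F(f)$.
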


	Lemma~\ref{Hk-estimate-1} is applied
	to handle the nonlinear pressure term $P'(\rho)-P'(1)$ and the factor
	$\rho^{-1}$, both of which are smooth functions of $a=\rho-1$ vanishing
	at $a=0$.  The uniform bound $\rho\in[\frac{1}{2},\frac32]$
	ensures the $L^\infty$ hypotheses.

%\medskip
%The next lemma controls $\|\mathbf{u}\|_{L^2}$ using the conservation law $\int_{\mathbb{T}^3}\rho\mathbf{u}\,dx=0$.  This is necessary because the system \eqref{MHD1} conserves the weighted mean $\int\rho\mathbf{u}\,dx$ rather than the unweighted mean $\int\mathbf{u}\,dx$.

%\begin{lemma}[Weighted Poincar\'{e} inequality,{\cite[Lemma~5.1]{Danchin-Mucha}}]\label{Poincare-inequality}
%Let $f\in L^2(\Omega)$ be a nonneg\-ative, nonzero measurable function on an open bounded domain $\Omega$ with $\partial\Omega\in C^1$.  There exists a constant $K>0$ depending only on $\Omega$ such that for any $z\in H^1(\Omega)$,
	%\begin{equation}\label{weighted-Poincare}\|z\|_{L^2}\leq \frac{1}{M}\Bigl|\int_\Omega fz\,dx\Bigr|+ K\Bigl(1+\frac{1}{M}\|M-f\|_{L^2}\Bigr)\|\nabla z\|_{L^2},\qquad M := \int_\Omega f\,dx.\end{equation}\end{lemma}

	%Applying \eqref{weighted-Poincare} with $f=\rho$, $z=u_i$ (the $i$-th component of $\mathbf{u}$), $M=\bar\rho=1$, and using $\int_{\mathbb{T}^3}\rho\mathbf{u}\,dx=0$, one obtains
	%\begin{equation}\label{u-Poincare}\|\mathbf{u}\|_{L^2}\leq K\bigl(1+\|a\|_{L^2}\bigr)\|\nabla\mathbf{u}\|_{L^2}.\end{equation}
	%Under the smallness assumption $\|a\|_{H^N}\leq\delta\ll 1$, the factor $(1+\|a\|_{L^2})$ is bounded, and \eqref{u-Poincare} reduces to the usual Poincar\'{e} inequality up to a controlled multiplicative constant. This estimate is used throughout Section~\ref{sec3} to pass between $\|\Lambda^s\mathbf{u}\|_{L^2}$ and $\|\mathbf{u}\|_{H^s}$.

\section{ A priori estimates}
\label{sec3}
This section is devoted to deriving \emph{a priori} estimates for smooth solutions to system \eqref{MHD1}. To this end, throughout this section, we assume that $(\rho,\u,\b)\in C([0,T];H^{N}(\mathbb T^3))$ is a smooth solution to system \eqref{MHD1} on $\mathbb T^3\times[0,T]$ for some $T>0$. Moreover,
\begin{equation}\begin{split}\label{pri-2}
		\frac{1}{2}\le \rho(t,x) \le \frac32, \quad \text{for all } (x,t)\in\mathbb T^3\times[0,T].
\end{split}\end{equation}
For simplicity, we set
$$
Q:=\sup_{0\leq k\leq
N+6,s\in(\frac12,\frac32)}|p^{(k)}(s)|,\quad\beta:=p'(1),\quad \Lambda:=\sqrt{-\Delta}.
$$
Denote by $e(\rho)$ the potential energy density, namely
$$
e(\rho)=2\rho\int_{1}^\rho\frac{p(s)-p(1)}{s^2}ds.
$$
Recalling that $p(\rho)$ is increasing in $\rho$, it is clear that $e(\rho)>0$ for $\rho\in(0,1)\cup(1,\infty)$ and $e(1)=0$.
Taking $L^2$ inner product of $\eqref{MHD1}_{2,3}$ with $(u,h)$ and using $\eqref{MHD1}_1$, one gets by direct calculations that
\begin{equation}\begin{split}\label{pri-1}
\frac12\frac{d}{dt}\int\Big(\rho|\u|^2+|\b|^2+e(\rho)\Big)\,dx+\int\rho|\u|^2\,dx=0.
\end{split}\end{equation}
%Because of the embedding relation and  the  smallness of the norm in $H^N(\T^3)$,
We easily obtain by \eqref{pri-2} that
\begin{equation*}%\label{eq:smallad}
\sup_{t\in\R_+,\, x\in\T^3} |a(t,x)|\leq \frac12.
\end{equation*}
Noting $e(1)=e'(1)=0$ and $e''(\rho)=\frac{2p'(\rho)}{\rho}>0$ and $a\in(-\frac12,\frac12)$,
one obtains by  Taylor's expansion that
 \begin{equation*}\begin{split}
\Big(\inf_{\rho\in(\frac12,\frac32)}\frac{p'(\rho)}{\rho}\Big)a^2\leq e(a+1)\leq \Big(\sup_{\rho\in(\frac12,\frac32)}\frac{p'(\rho)}{\rho}\Big)a^2,
\end{split}\end{equation*}
which  together with \eqref{pri-2} and \eqref{pri-1} implies that
\begin{equation}\begin{split}\label{pri-1-1}
\frac{d}{dt}\int\Big(\rho|\u|^2+|\b|^2+e(\rho)\Big)\,dx+\|\u\|_0^2\leq0
\end{split}\end{equation}
with \begin{equation}\begin{split}\label{2026-6-27-pri-1-1}&\min\{\inf_{\rho\in(\frac12,\frac32)}\frac{p'(\rho)}{\rho},\frac12\}\|(a,\u,\b)\|^2_0\\&\leq\int\Big(\rho|\u|^2+|\b|^2+e(\rho)\Big)\,dx\\
&\leq\max\{\sup_{\rho\in(\frac12,\frac32)}\frac{p'(\rho)}{\rho},\frac32\}\|(a,\u,\b)\|^2_0.\end{split}\end{equation}
%Recalling $\int \b \,dx=0$ in  (\ref{junzhi1}), Poincar\'{e}'s inequality , for an absolute positive constant $C$, gives rise to,
%\begin{equation}\begin{split}\label{pri-3}\|\b\|_0\leq C\|\nabla \b\|_0.\end{split}\end{equation}
%Due to $\int\rho \u \,dx=0$ and $\int\rho \,dx=1$ in (\ref{junzhi1}), it follows from Lemma \ref{Poincare-inequality} that
%\begin{equation}\begin{split}\label{pri-7-22-1}\|\u\|_0\leq\big|\int \rho \u \,dx\big|+K\big(1+\|\rho-1\|_{0}\big)\|\nabla \u\|_{0}\leq K(1+\|a\|_{0})\|\nabla \u\|_0.\end{split}\end{equation}
%Thanks to (\ref{pri-3}) and (\ref{pri-7-22-1}), one gets by  Poincar\'{e}'s inequality that
%\begin{equation}\label{EQUIVN}\begin{split}
  %\|\Lambda^sa\|_0\leq\|a\|_s\leq C_s\|\Lambda^sa\|_0,\quad\|\Lambda^s\b\|_0\leq\|\b\|_s\leq C_s\|\Lambda^s\b\|_0,\quad\|\Lambda^s\u\|_0\leq\|\u\|_s\leq C_s(1+\|a\|_{0})\|\Lambda^s\u\|_0,\end{split}\end{equation}
%for any nonnegative integer $s$, where $C_s\geq1$ is a constant depending only on $s$.This fact will be used throughout this section without further mentions.

\begin{lemma}\label{lemma-pri1}
Let $(\rho,\u,\b) \in C([0, T];H^N)$ be a solution to the  system \eqref{MHD1}. Denote
$$E_s(t):=\int\Big(\rho|\u|^2+|\b|^2+e(\rho)\Big)\,dx+\Big\|\big(\frac{p'}{\rho}\Lambda^sa,\sqrt{\rho}\Lambda^s\u,\Lambda^s\b\big)\Big\|^2_0.$$ Then, for any $3\leq s\leq N$, it holds that
\begin{align}\label{pri-2-2}
\frac{d}{dt}E_s+
\|(u,\Lambda^s\u)\|_0^2\leq C(1+\|(a,\u,\b)\|_3)\|(a,\u,\b)\|_3\|(a,\u,\b)\|^2_s+
\|a\|_3\|\b\|^{2}_3\|a\|_s\|\u\|_s
\end{align}
with
\begin{equation}
\begin{split}\label{bu-2026-6-27-1}& E_s\geq\min\{\inf_{\rho\in(\frac12,\frac32)}\frac{p'(\rho)}{\rho},\frac12\}\big(\|(a,\u,\b)\|^2_0+\|\Lambda^s(a,\u,\b)\|^2_0\big),\\&E_s\leq\max\{\sup_{\rho\in(\frac12,\frac32)}\frac{p'(\rho)}{\rho},\frac32\}\big(\|(a,\u,\b)\|^2_0+\|\Lambda^s(a,\u,\b)\|^2_0\big),\end{split}\end{equation}
where the positive constant $C$ depends only on $|\omega|,\beta,$ and $Q$.
\end{lemma}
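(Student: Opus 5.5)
The zeroth-order part of $E_s$ is handled by the basic energy identity \eqref{pri-1}–\eqref{pri-1-1}, and the equivalence \eqref{bu-2026-6-27-1} follows from \eqref{2026-6-27-pri-1-1} together with \eqref{pri-2}, since $p'(\rho)/\rho$ and $\rho$ are pinched between positive constants. So the substance of the lemma is the top-order estimate for $\|(\tfrac{p'}{\rho}\Lambda^s a,\sqrt\rho\Lambda^s\u,\Lambda^s\b)\|_0^2$.

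\textbf{Symmetrized form.} I will write the system in nonconservative form:
\[
\partial_t a+\u\cdot\nabla a+\rho\,\div\u=0,\qquad
\partial_t\u+\u\cdot\nabla\u+\u+\frac{p'(\rho)}{\rho}\nabla a=\frac{1}{\rho}\bigl(\H\cdot\nabla\b-\nabla(\H\cdot\b)\bigr),
\]
with $\H=\omega+\b$, and $\partial_t\b+\u\cdot\nabla\b=\H\cdot\nabla\u-\H\,\div\u$. I apply $\Lambda^s$ to each equation and test against the matching weighted multiplier: $\frac{p'}{\rho^2}\Lambda^s a$ for the $a$-equation (the weight is chosen so that the coefficients symmetrize), $\rho\Lambda^s\u$ for the momentum equation, and $\Lambda^s\b$ for the induction equation.

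\textbf{Leading-order cancellations.} After commuting the smooth coefficients $\rho$, $p'(\rho)/\rho$ out of $\Lambda^s$, three cancellations occur.
\begin{itemize}
\item[(i)] The pressure–divergence pair $\int\frac{p'}{\rho}\Lambda^s\nabla a\cdot\Lambda^s\u+\int\frac{p'}{\rho}\Lambda^s\div\u\,\Lambda^s a$ integrates by parts to a term with $\nabla(p'/\rho)$, which is bounded by $\|a\|_3\|(a,\u)\|_s^2$.
\item[(ii)] The linear magnetic coupling $\int(\omega\cdot\nabla\Lambda^s\b-\nabla(\omega\cdot\Lambda^s\b))\cdot\Lambda^s\u$ cancels exactly against $\int(\omega\cdot\nabla\Lambda^s\u-\omega\,\div\Lambda^s\u)\cdot\Lambda^s\b$, using $\div\b=0$.
\item[(iii)] The nonlinear pairs from $\b\cdot\nabla\b$ and $\b\cdot\nabla\u$ (the pair $(\mathrm I_5,\mathrm I_8)$), and from $\tfrac12\nabla|\b|^2$ and $\b\,\div\u$ (the pair $(\mathrm I_6,\mathrm I_9)$), cancel at top order after integration by parts, again using $\div\b=0$. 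What remains are Kato–Ponce commutators (Lemma \ref{Hk-estimate-7-8}) of the form $\|\nabla\b\|_{L^\infty}\|\b\|_s\|\u\|_s$.
\end{itemize}
The transport terms $\u\cdot\nabla\Lambda^s f$ are handled by integrating by parts, which produces $\|\div\u\|_{L^\infty}\|f\|_s^2$, plus commutators. The damping term yields $\int\rho|\Lambda^s\u|^2\ge\frac12\|\Lambda^s\u\|_0^2$.

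\textbf{Main obstacle: the factor $1/\rho$.} The Lorentz force carries the factor $1/\rho=1-a/\rho$. Tested against $\rho\Lambda^s\u$, the weight cancels only if the multiplier and the equation are aligned exactly, so I will test the $\rho$-multiplied momentum equation, i.e.\ the form in \eqref{MHD1}. This creates a mismatch $\int\Lambda^s(a\partial_t\u)\cdot\Lambda^s\u$, together with the corresponding commutators. In those, $\partial_t\u$ must be replaced via the equation, and $\partial_t\rho$ terms arising from the weights via the continuity equation. This is where the cubic term $\|a\|_3\|\b\|_3^2\|a\|_s\|\u\|_s$ appears: it comes from composition estimates (Lemma \ref{Hk-estimate-1}) applied to $1/\rho$ times $\b\cdot\nabla\b$-type terms. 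I expect the careful bookkeeping of these weight and commutator terms, and keeping every constant dependent only on $|\omega|,\beta,Q$, to be the delicate part. All the other terms close via Lemmas \ref{Hk-estimate} and \ref{Hk-estimate-7-8}, with $H^3\hookrightarrow W^{1,\infty}$.

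Finally, I add the top-order estimate to \eqref{pri-1-1}, obtaining the dissipation $\|(\u,\Lambda^s\u)\|_0^2$ and hence \eqref{pri-2-2}.
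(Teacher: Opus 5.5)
Your final route is essentially the paper's proof. You apply $\Lambda^s$ to $\eqref{MHD1}_2$ and $\eqref{MHD1}_3$, test against $\Lambda^s\u$ and $\Lambda^s\b$, rewrite the pressure term through the continuity equation, cancel the linear $\omega$-terms and the pairs $(\mathrm I_5,\mathrm I_8)$, $(\mathrm I_6,\mathrm I_9)$ at top order, and bound the rest by Kato--Ponce. You also locate the origin of the last term in \eqref{pri-2-2} correctly: in the paper it comes from $\mathrm I_1=-\int[\Lambda^s,a]\partial_t\u\cdot\Lambda^s\u\,dx$, through $\|\frac1\rho\|_{s-1}\|(\b\cdot\nabla)\b\|_{L^\infty}$ inside $\|\partial_t\u\|_{s-1}$.

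\textbf{The density weight.} Your density weight is off by a factor of $\rho$, and as written this breaks the top-order cancellation. Whether you test $\eqref{MHD1}_2$ against $\Lambda^s\u$ or your nonconservative form against $\rho\Lambda^s\u$, the top-order pressure term is $\int p'\nabla\Lambda^s a\cdot\Lambda^s\u\,dx$. Meanwhile, $\Lambda^s$ of the continuity equation contains $\rho\,\div\Lambda^s\u$. Hence the multiplier must be $\frac{p'}{\rho}\Lambda^s a$; this is what produces $\frac12\frac{d}{dt}\int\frac{p'}{\rho}|\Lambda^s a|^2dx$ in the paper's treatment of $\mathrm I_3$. The pair in your (i) then carries the weight $p'$, not $\frac{p'}{\rho}$, and leaves $\int\Lambda^s a\,\nabla p'\cdot\Lambda^s\u\,dx$. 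With $\frac{p'}{\rho^2}\Lambda^s a$ --- the symmetrizer that goes with the \emph{unweighted} multiplier $\Lambda^s\u$ --- you are left with $\int\frac{p'a}{\rho}\Lambda^s a\,\div\Lambda^s\u\,dx$. That costs a derivative which the damping term $\|\Lambda^s\u\|_0^2$ cannot pay for.

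\textbf{The $\partial_t\u$ terms.} The only place where $\partial_t\u$ may be replaced by the equation is the commutator $[\Lambda^s,a]\partial_t\u$. This needs just $\|\partial_t\u\|_{s-1}$ and $\|\partial_t\u\|_{L^\infty}$ (the paper's \eqref{pri-5}--\eqref{pri-6}). The piece $a\Lambda^s\partial_t\u$ of $\Lambda^s(a\partial_t\u)$ must instead be absorbed into $\frac12\frac{d}{dt}\int\rho|\Lambda^s\u|^2dx$, together with $\rho\u\cdot\nabla\Lambda^s\u$ and the continuity equation. Substituting the equation there would put $s+1$ derivatives on $a$ and $\b$.

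\textbf{Your abandoned first plan.} The ``obstacle'' that made you drop it is not real. For the Lorentz force $F$ one has $\int\Lambda^s(\frac1\rho F)\cdot\rho\Lambda^s\u\,dx=\int\Lambda^sF\cdot\Lambda^s\u\,dx+\int\rho[\Lambda^s,\frac1\rho]F\cdot\Lambda^s\u\,dx$. The commutator is bounded by the Kato--Ponce commutator estimate and Lemma \ref{Hk-estimate-1}. So the nonconservative form tested against $(\frac{p'}{\rho}\Lambda^s a,\rho\Lambda^s\u,\Lambda^s\b)$ closes as well, and is somewhat simpler than the paper's route. No $\partial_t\u$ ever appears, so \eqref{pri-5}--\eqref{pri-6} are not needed and the last term of \eqref{pri-2-2} does not arise. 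The term $\|\b\|_3^2\|a\|_s\|\u\|_s$ coming from $[\Lambda^s,\frac1\rho](\b\cdot\nabla\b)$ is absorbed by the first term on the right of \eqref{pri-2-2}.
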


\begin{proof} %Obviously, \eqref{pri-2-2} with $\ell=0$ is the basic energy inequality in \eqref{pri-1-1}. We consider the case when $\ell\ge 1$.
Applying $\Lambda^s$ %with $1\leq s\leq \ell$
to $\eqref{MHD1}_{2}$, $\eqref{MHD1}_{3}$ and taking $L^2$ inner product with $\Lambda^s\u$ and $\Lambda^s \b,$ respectively,  then integrating by parts and using $\eqref{MHD1}_{1}$, $\eqref{MHD1}_{4}$ yield
\begin{equation}\begin{split}\label{pri-4}
&\frac12\frac{d}{dt}\|(\sqrt\rho\Lambda^s\u,\Lambda^s\b)\|^2_0+\|\rho\Lambda^{s}\u\|_0^2
%&\quad=\int\Big[\Big(-[\Lambda^s,\rho]\u_t-[\Lambda^s,\rho \u\cdot\nabla]\u-p'\nabla\Lambda^s\rho-[\Lambda^s,p']\nabla\rho\\
%&\qquad-\Lambda^s(\rho \u)+\Lambda^s(\b\cdot\nabla \b)-\frac12\Lambda^s\nabla(|\b|^2)\Big)\cdot\Lambda^s \u\\
%&\qquad-\Big(\Lambda^s(\u\cdot\nabla \b)-\Lambda^s(\b\cdot\nabla \u)+\Lambda^s(\b\div \u)\Big)\cdot\Lambda^s \b\Big]\,dx\\
=:\sum_{j=1}^{10}\textrm{I}_j,
\end{split}\end{equation}
where
\begin{align*}
  &\textrm{I}_1:=-\int[\Lambda^s,\rho]\u_t\cdot \Lambda^s\u \,dx,&&\textrm{I}_2:=-\int[\Lambda^s,\rho \u\cdot\nabla]\u\cdot \Lambda^s\u \,dx,\\
   &\textrm{I}_3:=-\int p'\nabla\Lambda^s\rho\cdot \Lambda^s\u \,dx,&&\textrm{I}_4:=-\int [\Lambda^s,p']\nabla\rho\cdot\Lambda^s \u \,dx,\\
   &\textrm{I}_5:=\int \Lambda^s(\b\cdot\nabla \b)\cdot\Lambda^s \u \,dx,&&\textrm{I}_6:= -\frac12\int \Lambda^s\nabla(|\b|^2)\cdot\Lambda^s \u \,dx,\\
   &\textrm{I}_7:=-\int\Lambda^s(\u\cdot\nabla \b)\cdot\Lambda^s\b \,dx,&&\textrm{I}_8:=\int\Lambda^s(\b\cdot\nabla \u)\cdot\Lambda^s\b \,dx,\\ &\textrm{I}_9:=-\int\Lambda^s(\b\div \u)\cdot\Lambda^s \b \,dx,&&\textrm{I}_{10}:=-\int[\Lambda^s,\rho]\u\cdot\Lambda^s \u \,dx.
\end{align*}
In what follows, we shall bound each term above.  We first estimate $\textrm{I}_3$.
Using $\eqref{MHD1}_{1}$, one has
\begin{align*}
  \rho\Lambda^s\text{div}\u=&\Lambda^s(\rho\text{div}\u)-[\Lambda^s,\rho]\text{div}\u\\
  %=-\Lambda^s(\partial_t\rho+\u\cdot\nabla\rho)-[\Lambda^s,\rho]
 % \text{div}\u\\
  =&-(\partial_t\Lambda^s\rho+\u\cdot\nabla\Lambda^s\rho)-[\Lambda^s,\u\cdot\nabla]\rho-[\Lambda^s,\rho]\text{div}\u.
\end{align*}
Noticing that
$$
\Big(\frac{p'}{\rho}\Big)_t+\div \Big(\frac{p'}{\rho}\u\Big)=\Big(\frac{2p'}{\rho}-p''\Big)\div \u,
$$
it follows from the integration by parts formula that
\begin{equation*}\begin{split}
\textrm{I}_3&=\int\Lambda^s\rho\big(p'\div \Lambda^s\u+\nabla p'\cdot \Lambda^s\u\big)\,dx\\
%&=\int\Big[-\frac{p'}{\rho}\Lambda^s\rho\Big(\partial_t\Lambda^s\rho+\u\cdot\nabla\Lambda^s\rho
%+[\Lambda^s,\u\cdot\nabla]\rho+[\Lambda^s,\rho]\div \u\Big)+\Lambda^s\rho\nabla p'\cdot\Lambda^s\u\Big]\,dx\\
%&=-\frac12\frac{d}{dt}\int\frac{p'}{\rho}|\Lambda^s\rho|^2\,dx+\frac12\int\Big[\big(\frac{p'}{\rho}\big)_t+\div
%\big(\frac{p'}{\rho}\u\big)\Big]|\Lambda^s\rho|^2\,dx-\int\frac{p'}{\rho}\Lambda^s\rho[\Lambda^s,\u\cdot\nabla]\rho \,dx\\&\quad-\int\frac{p'}{\rho}\Lambda^s\rho[\Lambda^s,\rho]\div \u \,dx+\int\Lambda^s\rho\nabla p'\cdot\Lambda^s\u \,dx\\
&=-\frac12\frac{d}{dt}\int\frac{p'}{\rho}|\Lambda^s\rho|^2\,dx+\frac12\int\big(\frac{2p'}{\rho}-p''\big)\div \u|\Lambda^s\rho|^2\,dx
-\int\frac{p'}{\rho}\Lambda^s\rho[\Lambda^s,\u\cdot\nabla]\rho \,dx
\\&\quad-\int\frac{p'}{\rho}\Lambda^s\rho[\Lambda^s,\rho]\div \u \,dx+\int\Lambda^s\rho\nabla p'\cdot\Lambda^s\u \,dx.
\end{split}\end{equation*}
Since $\rho\in(\frac12,\frac32)$ and $p(\rho)$ is a smooth function in $\rho$, we conclude  from Lemma \ref{Hk-estimate-7-8}, the embedding relation , that
\begin{equation*}\begin{split}
\Big|\frac12\int\Big(\frac{2p'}{\rho}-p''\Big)\div \u|\Lambda^s\rho|^2\,dx\Big|
&\lesssim\Big\|\frac{2p'}{\rho}-p''\Big\|_{L^\infty}\|\nabla\u\|_{L^\infty}\|a\|^2_s
%\lesssim\|\nabla\u\|_{L^\infty}\|a\|^2_s
\lesssim\|\u\|_{3}\|a\|^2_s,\\
\Big|-\int\frac{p'}{\rho}\Lambda^s\rho[\Lambda^s,\u\cdot\nabla]\rho \,dx\Big|&\lesssim\Big\|\frac{p'}{\rho}\Big\|_{L^\infty}\|a\|_s\|[\Lambda^s,\u\cdot\nabla]\rho\|_0
\lesssim\|(a,\u)\|_{3}\|(a,\u)\|^2_s,\\
\Big|-\int\frac{p'}{\rho}\Lambda^s\rho[\Lambda^s,\rho]\div \u \,dx\Big|
&\lesssim\Big\|\frac{p'}{\rho}\Big\|_{L^\infty}\|a\|_s\|[\Lambda^s,\rho]\div \u\|_0
\lesssim\|(a,\u)\|_{3}\|(a,\u)\|^2_s,
\end{split}\end{equation*}
and
\begin{equation*}\begin{split}
\Big|\int\Lambda^s\rho\nabla p'\cdot\Lambda^s\u \,dx\Big|
&\lesssim\|\nabla p'\|_{L^\infty}\|\Lambda^sa\|_0\|\Lambda^s\u\|_0\lesssim\|a\|_{3}\|(a,\u)\|^2_s.
\end{split}\end{equation*}
Therefore, we have
\begin{equation*}\begin{split}
\frac12\frac{d}{dt}\int\frac{p'}{\rho}|\Lambda^sa|^2\,dx+\textrm{I}_3\lesssim\|(a,\u)\|_3\|(a,\u)\|_s^2.
\end{split}\end{equation*}
For $\textrm{I}_2$,$\textrm{I}_4$ and $\textrm{I}_{10}$, it follows from
Lemma \ref{Hk-estimate}, Lemma \ref{Hk-estimate-7-8}, Lemma \ref{Hk-estimate-1}, \eqref{pri-2}, and the embedding relation, that
\begin{align*}
|\textrm{I}_{2}|&
\leq\|[\Lambda^s,\rho \u\cdot\nabla]\u\|_0\|\Lambda^s\u\|_0\\
&\lesssim(1+\|(a,\u)\|_2)\|(a,\u)\|_3\|(a, \u)\|^2_s+\|\u\|_2\|\u\|_3\|\u\|_s,
\end{align*}
\begin{align*}
|\textrm{I}_{4}|&
\leq\|[\Lambda^s,p']\nabla\rho\|_0\|\Lambda^s \u\|_0\\
&\lesssim(\|\nabla a\|_{L^\infty}\|\Lambda^sa\|_0+\|\Lambda^s(p'(1+a)-p'(1))\|_{0}\|\nabla a\|_{L^\infty})\| \u\|_s\\
&\lesssim\|a\|_{3}\|(a,\u)\|^2_s,
\end{align*}
and
\begin{align*}
|\textrm{I}_{10}|&
\leq\|[\Lambda^s,\rho]\u\|_0\|\Lambda^s\u\|_0\\
&\lesssim(\|\u\|_{L^\infty}\|\Lambda^sa\|_0+\|\Lambda^{s-1}\u\|_{0}\|\nabla a\|_{L^\infty})\|\u\|_s\\
%\lesssim&(\|\nabla a\|_{L^\infty}\|\Lambda^sa\|_0+\|\Lambda^s(p'(1+a)-p'(1))\|_{0}\|Da\|_{L^\infty})\|u\|_s\\
&\lesssim\|(a,\u)\|_{3}\|(a,\u)\|^2_s.
\end{align*}
We now deal with terms $\textrm{I}_{5}$, $\textrm{I}_{6}$, $\textrm{I}_{8}$ and $\textrm{I}_{9}.$ First,  employing  the integration by
parts formula and  $\div \b=0$, using Lemma \ref{Hk-estimate}, %\eqref{EQUIVN},
and the embedding relation, we infer that
\begin{equation*}\begin{split}
|\textrm{I}_{5}+\textrm{I}_{8}|&\leq\Big|\int[\Lambda^s,\b\cdot\nabla] \b\cdot\Lambda^s \u \,dx+\int[\Lambda^s,\b\cdot\nabla] \u\cdot\Lambda^s \b \,dx\Big|
\\&\quad+\Big|\int \b\cdot\nabla\Lambda^s\b\cdot\Lambda^s \u \,dx+\int \b\cdot\nabla\Lambda^s \u\cdot\Lambda^s\b \,dx\Big|
\\&\lesssim\big(\|\nabla \u\|_{L^\infty}\| \b\|_s+\|\nabla \b\|_{L^\infty}\|\u\|_s\big )\|\u\|_s\\&\lesssim\|(\u,\b)\|_{3}\|(\u,\b)\|^{2}_s.
\end{split}\end{equation*}
It follows from the integration by parts formula, Lemma \ref{Hk-estimate}, %\eqref{EQUIVN},
and the embedding relation,  that
\begin{equation*}\begin{split}
  |\textrm{I}_{6}+\textrm{I}_{9}|
  &=\Big|-\frac12\int\Lambda^s\nabla|\b|^2\cdot\Lambda^s\u \,dx-\int\Lambda^s(\b\div \u)\cdot\Lambda^s\b \,dx\Big|
  \\&=\Big|-\sum_{i,k=1}^{d}\int\Lambda^s(\partial_{i}\b^k\b^k)\Lambda^s\u^{i} \,dx-\int\Lambda^s(\b\div \u)\cdot\Lambda^s \b \,dx\Big|
  \\&=\Big|-\sum_{i,k=1}^{d}\int[\Lambda^s,\b^k]\partial_{i}\b^k\cdot\Lambda^s \u^{i}\,dx-\int[\Lambda^s,\b]\div \u\cdot\Lambda^s \b \,dx\\&\quad-\sum_{i,k=1}^{d}\int\Lambda^s\partial_{i}\b^k\b^k\Lambda^s\u^{i}\,dx-\int \b\Lambda^s\div \u\cdot\Lambda^s \b \,dx\Big|
  \\&=\Big|-\sum_{i,k=1}^{d}\int[\Lambda^s,\b^k]\partial_{i}\b^k\cdot\Lambda^s\u^{i}\,dx-\int[\Lambda^s,\b]\div \u\cdot\Lambda^s \b \,dx\\&\quad+\sum_{i,k=1}^{d}\int\Lambda^s\b^k\partial_{i}\b^k\Lambda^s\u^{i}\,dx+\sum_{i,k=1}^{d}\int\Lambda^s\b^k\b^k\Lambda^s\partial_{i}\u^{i}\,dx
  -\int \b\Lambda^s\div \u\cdot\Lambda^s \b \,dx\Big|
  \\
  &=\Big|-\sum_{i,k=1}^{d}\int[\Lambda^s,\b^k]\partial_{i}\b^k\cdot\Lambda^s\u^{i}\,dx-\int[\Lambda^s,\b]\div \u\cdot\Lambda^s \b \,dx+\sum_{i,k=1}^{d}\int\Lambda^s\b^k\partial_{i}\b^k\Lambda^s\u^{i}\,dx\Big|
  \\&\lesssim\big(\|\nabla \u\|_{L^\infty}\| \b\|_s+\|\nabla \b\|_{L^\infty}\|\u\|_s\big)\|\u\|_s+\|\nabla \b\|_{L^\infty}\|\u\|_s\|\b\|_s\\&\lesssim\|(\u,\b)\|_{3}\|(\u,\b)\|^{2}_s.
\end{split}\end{equation*}
For the term $I_7$, applying the  integrating by parts,  Lemma \ref{Hk-estimate}, and using the embedding relation, we infer that
\begin{equation*}\begin{split}
|\textrm{I}_{7}|
%&=\Big|\int\Lambda^s(\u\cdot\nabla \b)\cdot\Lambda^s\b \,dx\Big|
%\\&=\Big|\int[\Lambda^s, \u\cdot\nabla]\b\cdot\Lambda^s \b \,dx+\int u\cdot\nabla\Lambda^s \b\cdot\Lambda^s \b \,dx\Big|\\
%\\
%&=\Big|\int[\Lambda^s, \u\cdot\nabla]\b\cdot\Lambda^s \b \,dx-\int \div \u|\Lambda^s \b|^{2}\,dx\Big|
%\\
&\lesssim\big(\|\nabla \u\|_{L^\infty}\| \b\|_s+\|\nabla \b\|_{L^\infty}\|\u\|_s\big)\|\b\|_s+\|\nabla \u\|_{L^\infty}\|\b\|^{2}_s\\&\lesssim\|(\u,\b)\|_{3}\|(\u,\b)\|^{2}_s,
\end{split}\end{equation*}
Finally, it remains to estimate $\textrm{I}_{1}.$ We first bound $\|\u_t\|_{L^\infty}.$ By \eqref{pri-2}
and the embedding relation,  we have
\begin{equation}\begin{split}\label{pri-5}
\|\partial_t\u\|_{L^\infty}&=\Big\|-(\u\cdot\nabla)\u-\u-\frac{p'}{\rho}\nabla a+\rho^{-1}\big((\omega\cdot \nabla) \b+(\b\cdot \nabla) \b-\nabla(\omega\cdot \b)-\frac12\nabla(|\b|^2)\big)\Big\|_{L^\infty}\\
&\lesssim\|\u\|_{L^\infty}\|\nabla\u\|_{L^\infty}+\|\u\|_{L^\infty}+\|\nabla a\|_{L^\infty}+(1+\|\b\|_{L^\infty})\|\nabla\b\|_{L^\infty}\\
&\lesssim\big(1+\|(\u,\b)\|_{2}\big)\|(a,\u,\b)\|_3.
\end{split}\end{equation}
Next, we estimate $\|\u_t\|_{s-1}.$ Note that \eqref{pri-2} implies $a\in[-\frac12,\frac12]$. Thus, it follows from Lemma \ref {Hk-estimate-1}
that
\begin{equation*}\begin{split}
\Big\|\frac{p'}{\rho}\Big\|_{s-1}&=\Big\|\frac{p'(a+1)}{a+1}-p'(1)+p'(1)\Big\|_{s-1}\lesssim\Big\|\frac{p'(a+1)}{a+1}-p'(1)\Big\|_{s-1}+1\\
&\lesssim\Big\|\frac{p'(a+1)}{a+1}-p'(1)\Big\|_{s}+1\lesssim\|a\|_{s}+1
\end{split}\end{equation*}
and
\begin{equation*}\begin{split}
\Big\|\frac{1}{\rho}\Big\|_{s-1}&=\|(a+1)^{-1}-1+1\|_{s-1}\lesssim\|(a+1)^{-1}-1\|_{s-1}+1\\
&\lesssim\|(a+1)^{-1}-1\|_{s}+1\lesssim\|a\|_{s}+1.
\end{split}\end{equation*}
Thanks to the above two estimates, and $\rho\in[\frac12,\frac32]$, it follows from Lemma \ref{Hk-estimate}, embedding relation and $s\geq3$ that
\begin{equation}\begin{split}\label{pri-6}
\|\partial_t\u\|_{s-1}&=\left\|-\u\cdot\nabla \u-\u-\frac{p'}{\rho}\nabla a+\rho^{-1}\big((\omega\cdot \nabla) \b+(\b\cdot \nabla) \b-\nabla(\omega\cdot \b)-\frac12\nabla(|\b|^2)\big)\right\|_{s-1}\\
&\lesssim\|\u\|_{L^\infty}\|\nabla\u\|_{s-1}+\|\u\|_{s-1}\|\nabla\u\|_{L^\infty}+\|\u\|_{s-1}+\Big\|\frac{p'}{\rho}\Big\|_{s-1}\|\nabla a\|_{L^\infty}\\
&\quad+\Big\|\frac{p'}{\rho}\Big\|_{L^\infty}\|\nabla a\|_{s-1}+\Big\|\frac1\rho\Big\|_{L^\infty}
\|(\omega\cdot \nabla) \b+(\b\cdot \nabla) \b-\nabla(\omega\cdot \b)-\frac12\nabla(|\b|^2)\|_{s-1}\\
&\quad+\Big\|\frac1\rho\Big\|_{s-1}
\|(\omega\cdot \nabla) \b+(\b\cdot \nabla) \b-\nabla(\omega\cdot \b)-\frac12\nabla(|\b|^2)\|_{L^\infty}\\
&\lesssim\|\u\|_3\|\u\|_s+(1+\|a\|_s)\|a\|_3+\|(a,\u)\|_s+(1+\|\b\|_{L^\infty})\|\b\|_{s}\\
&\quad+(1+\|a\|_{s-1})\|\nabla \b\|_{L^\infty}
+(1+\|a\|_s)\|\nabla \b\|_{L^\infty}\|\b\|_{L^\infty}\\
&\lesssim\|\u\|_3\|\u\|_s+(1+\|a\|_s)\|a\|_3+\|(a,\u)\|_s+(1+\|\b\|_2)\|\b\|_{s}\\
&\quad+(1+\|a\|_{s})\|\b\|_3
+(1+\|a\|_s)\|\b\|_3^{2}
\\ &\lesssim\|\u\|_3\|\u\|_s+\|a\|_3\|a\|_s+\|(a,\u,\b)\|_s+\|\b\|_2\|\b\|_{s}\\
&\quad+\|\b\|^{2}_3+\|\b\|_3\|a\|_s+\|a\|_s\|\b\|^{2}_3.
\end{split}\end{equation}
This together with \eqref{pri-5}, \eqref{pri-6}, Lemma \ref{Hk-estimate-7-8}, and embedding relation yields
\begin{equation*}\begin{split}
|\textrm{I}_{1}|&=\left|\int[\Lambda^s,\rho]\partial_t\u\cdot\Lambda^s \u \,dx\right|
=\left|\int[\Lambda^s,a]\partial_t\u\cdot\Lambda^s \u \,dx\right|\\
&\leq\|[\Lambda^s,a]\partial_t\u\|_0\|\Lambda^s \u\|_0\lesssim
(\|\nabla a\|_{L^\infty}\|\partial_t\u\|_{s-1}+\|a\|_s\|\partial_t\u\|_{L^\infty})\|\u\|_s\\
&\lesssim\|a\|_3\|\u\|_s\big(\|\u\|_3\|\u\|_s+\|a\|_3\|a\|_s+\|(a,\u,\b)\|_s+\|\b\|_2\|\b\|_{s}\\
&\quad+\|\b\|^{2}_3+\|\b\|_3\|a\|_s+\|a\|_s\|\b\|^{2}_3\big)+(1+\|(\u,\b)\|_2)\|(a,\u,\b)\|_3\|a\|_s\|\u\|_s.
\end{split}\end{equation*}
Plugging the estimates for $\textrm{I}_{1}$--$\textrm{I}_{10}$ into \eqref{pri-4} and  combining with \eqref{pri-1-1} and %summing up for any $1\leq s\leq \ell$,
using \eqref{pri-2} and \eqref{2026-6-27-pri-1-1}, we finally obtain the desired inequality. This completes the proof of Lemma \ref{lemma-pri1}.
\end{proof}

Due to the lack of dissipation terms for the density and velocity field, one needs to
exploit the hidden dissipation benefit from the background magnetic
field. To this end, we rewrite system $\eqref{MHD1}$ as
\begin{equation}\label{MHD2}
\left\{
\begin{array}{rl}
&\hspace{-0,5cm}\partial_ta+\div \u=f_1,
\smallskip\\
&\hspace{-0,5cm}\partial_t \u
+\beta\nabla a +\u=(\omega\cdot \nabla) \b-\nabla(\omega\cdot  \b)+f_2,
\medskip\\
&\hspace{-0,5cm}\partial_t  \b
=(\omega\cdot \nabla)\u-\omega\div \u+f_3,
\medskip\\
&\hspace{-0,5cm}{\mathop{\rm div}}\, \b=0,
\medskip\\
&\hspace{-0,5cm}(\rho,\u, \b)_{|t=0}=(\rho_0,\u_0, \b_0),
\end{array}
\right.
\end{equation}
where $\beta:=p'(1)>0$, and $f_1,f_2,f_3$ are the nonlinear terms expressed as
\begin{equation*}\begin{split}
&f_1:=-\div (a \u),\\
&f_2:=-a\partial_t\u-(p'(a+1)-p'(1))\nabla a-a\u-\rho(\u\cdot\nabla)\u+( \b\cdot\nabla) \b-\frac{1}{2}\nabla|\b|^2,\\
&f_3:=-(\u\cdot\nabla) \b+( \b\cdot\nabla)\u- \b\div \u.
\end{split}\end{equation*}

\begin{lemma}\label{lemma-pri3-3-1}
Let $(a,\u,\b) \in C([0, T];H^N)$ be a solution to the  system \eqref{MHD2}. Then, for any $t\in[0,T]$, $s\geq0$, we have
\begin{equation}
\begin{split}\label{pri-199}
\beta\|&(\omega\cdot\nabla)\Lambda^{s}a\|_0^2+\frac{d}{dt}\int \omega\cdot\nabla\Lambda^{s}a\cdot\Lambda^{s}(\u\cdot\omega)\,dx\lesssim \|\u\|_{s+1}^{2}+\|(f_1,f_2)\|_{s}^{2}.
\end{split}
\end{equation}
\end{lemma}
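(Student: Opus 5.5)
The plan is to take the component of the momentum equation $\eqref{MHD2}_2$ along $\omega$, apply $\Lambda^s$, and test it against $(\omega\cdot\nabla)\Lambda^s a$. Dotting $\eqref{MHD2}_2$ with $\omega$ gives
\[
\partial_t(\u\cdot\omega)+\beta(\omega\cdot\nabla)a+\u\cdot\omega=(\omega\cdot\nabla)(\b\cdot\omega)-(\omega\cdot\nabla)(\omega\cdot\b)+f_2\cdot\omega
=f_2\cdot\omega .
\]
The key structural point is that the linear magnetic terms cancel exactly along $\omega$: $\omega\cdot[(\omega\cdot\nabla)\b]=(\omega\cdot\nabla)(\omega\cdot\b)=\omega\cdot\nabla(\omega\cdot\b)$. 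Hence no $\b$-term survives on the linear level, which is why the right-hand side of \eqref{pri-199} contains no $\b$.

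Next we apply $\Lambda^s$ and take the $L^2$ inner product with $(\omega\cdot\nabla)\Lambda^s a$. This yields
\[
\beta\|(\omega\cdot\nabla)\Lambda^s a\|_0^2+\int \partial_t\Lambda^s(\u\cdot\omega)\,(\omega\cdot\nabla)\Lambda^s a\,dx
=\int \Lambda^s\big(f_2\cdot\omega-\u\cdot\omega\big)(\omega\cdot\nabla)\Lambda^s a\,dx .
\]
To produce the time derivative in \eqref{pri-199}, we move $\partial_t$ onto $a$ in the second term:
\[
\int \partial_t\Lambda^s(\u\cdot\omega)\,(\omega\cdot\nabla)\Lambda^s a\,dx=\frac{d}{dt}\int (\omega\cdot\nabla)\Lambda^s a\,\Lambda^s(\u\cdot\omega)\,dx-\int (\omega\cdot\nabla)\Lambda^s\partial_t a\,\Lambda^s(\u\cdot\omega)\,dx .
\]
In the last term we substitute $\partial_t a=-\div\u+f_1$ from $\eqref{MHD2}_1$ and integrate by parts, moving $\omega\cdot\nabla$ onto $\Lambda^s(\u\cdot\omega)$. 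This gives
\[
-\int \Lambda^s(\div\u-f_1)\,(\omega\cdot\nabla)\Lambda^s(\u\cdot\omega)\,dx,
\]
which is bounded by $C|\omega|^2\big(\|\u\|_{s+1}+\|f_1\|_s\big)\|\u\|_{s+1}\lesssim \|\u\|_{s+1}^2+\|f_1\|_s^2$. The point of this integration by parts is that no derivative ever falls on $f_1$ beyond $\Lambda^s$.

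The right-hand side terms are handled by Young's inequality:
\[
\Big|\int\Lambda^s(f_2\cdot\omega-\u\cdot\omega)(\omega\cdot\nabla)\Lambda^s a\,dx\Big|\le \frac{\beta}{2}\|(\omega\cdot\nabla)\Lambda^s a\|_0^2+C\big(\|f_2\|_s^2+\|\u\|_s^2\big).
\]
The first piece is absorbed into the left-hand side, leaving $\frac{\beta}{2}\|(\omega\cdot\nabla)\Lambda^s a\|_0^2$ there. Redefining the constant (or keeping $\beta/2$, which is harmless for $\lesssim$) gives \eqref{pri-199}.

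There is no genuine obstacle here, since every term is linear in the unknowns except $f_1,f_2$, which are kept as they are. The only care needed is to verify the exact cancellation of the $\b$-terms along $\omega$, and to perform the integration by parts in the $\partial_t a$ term so that only $\|\u\|_{s+1}$ and $\|f_1\|_s$ appear, rather than $\|f_1\|_{s+1}$.
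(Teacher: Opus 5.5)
Your proposal is correct and is essentially the paper's proof: project $\eqref{MHD2}_2$ onto $\omega$, where the linear $\b$-terms cancel (the paper uses this silently). Then apply $\Lambda^s$, test against $(\omega\cdot\nabla)\Lambda^s a$, move $\partial_t$ onto $a$ using $\eqref{MHD2}_1$, and absorb via Young's inequality. Your integration by parts onto $\Lambda^s(\u\cdot\omega)$ is what actually justifies the bound $\|\u\|_{s+1}^2+\|f_1\|_s^2$; the paper's displayed intermediate term $\|(\omega\cdot\nabla)\Lambda^s\div\u\|_0$ would cost $s+2$ derivatives. As in the paper, what you actually obtain is $\frac{\beta}{2}$ rather than $\beta$ in front of $\|(\omega\cdot\nabla)\Lambda^s a\|_0^2$. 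This cannot be removed by enlarging the implicit constant, since the $\frac{d}{dt}$ term has fixed coefficient $1$, but only a positive multiple is used later in Lemma~\ref{lemma-pri2}.
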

\begin{proof}
%Repeating the similar estimate of  we conclude by \eqref{priori-1}, $L\geq3+r$ and $3+r<M< N-2-r$ that
Multiplying  $\eqref{MHD2}_{2}$ by $\omega$, applying $\Lambda^{s}$ to $\eqref{MHD2}_{2}$, and taking $L^2$ inner product to the resultant with $\omega\cdot\nabla\Lambda^{s}a$,
integrating by parts, and using $\eqref{MHD2}_{1}$ yield
\begin{equation}\begin{split}\label{2-pri-77-1}
\beta\|(\omega\cdot\nabla)\Lambda^{s}a\|_0^2
&=-\int (\omega\cdot\nabla)\Lambda^{s}a\cdot\Lambda^{s}\partial_t(\u\cdot\omega)\,dx-\int (\omega\cdot\nabla)\Lambda^{s}a\cdot\Lambda^{s}(\u\cdot\omega)\,dx\\&\quad+\int \omega\cdot\nabla\Lambda^{s}a \cdot\Lambda^{s}(f_2\cdot\omega)\,dx.
\end{split}\end{equation}
For the first term on the right-hand side of \eqref{2-pri-77-1}, one gets by $\eqref{MHD2}_1$ that
\begin{equation*}\begin{split}%\label{pri-17}
&-\int (\omega\cdot\nabla)\Lambda^{s}a\cdot\Lambda^{s}\partial_t(\u\cdot\omega)\,dx\\
&\quad =-\frac{d}{dt}\int (\omega\cdot\nabla)\Lambda^{s}a\cdot\Lambda^{s}(\u\cdot\omega)\,dx+\int (\omega\cdot\nabla)\Lambda^{s}\partial_ta\cdot\Lambda^{s}(\u\cdot\omega)\,dx\\
&\quad =-\frac{d}{dt}\int (\omega\cdot\nabla)\Lambda^{s}a\cdot\Lambda^{s}(\u\cdot\omega)\,dx+\int(\omega\cdot\nabla)\Lambda^{s}(-\text{div}\u+f_1)\cdot\Lambda^{s}(\u\cdot\omega)\,dx.
%&\ =-\frac{d}{dt}\int(\omega\cdot\nabla)\Lambda^Lh\cdot\Lambda^Lu \,dx+\textrm{III}_1+\textrm{III}_2,
\end{split}\end{equation*}
%Obviously, using integration by
%parts formula and  $\text{div}h=0$, we have
%$$
%-\int(\omega\cdot\nabla)\Lambda^Lh\cdot\nabla\Lambda^L(\beta a+ h\cdot\omega)\,dx=\int(\omega\cdot\nabla)\Lambda^L\text{div}h\Lambda^L(\beta a+ h\cdot\omega)\,dx=0.
%$$
It follows from H\"{o}lder's and Young's inequalities that
\begin{align*}
 \Big|\int(\omega\cdot\nabla)\Lambda^{s}(-\text{div}\u+f_1)\cdot\Lambda^{s}(\u\cdot\omega)\,dx\Big| &\lesssim\|(\omega\cdot\nabla)\Lambda^{s}\text{div}\u\|^{2}_0+\|(\omega\cdot\nabla)\Lambda^{s}\text{div}\u\|_0\|\omega\cdot\Lambda^{s}f_1\|_{0}\\
  &\lesssim\|\u\|^2_{s+1}+\|f_1\|^{2}_{s},
\end{align*}
%\begin{align*}&\Big|(\omega\cdot\nabla)\Lambda^{s}a\cdot\Lambda^{s}(\omega\cdot\nabla(\b\cdot\omega))+\int(\omega\cdot\nabla)\Lambda^{s}a\cdot\Lambda^{s}(\omega\cdot\nabla \b\cdot\omega)\,dx\Big|\lesssim \|\b\|^2_{s+1}+\epsilon\|(\omega\cdot\nabla)\Lambda^{s}a\|_{0}^{2},\end{align*}
\begin{align*}
  \big|\int (\omega\cdot\nabla)\Lambda^{s}a\cdot\Lambda^{s}(\u\cdot\omega)\,dx\big|\leq C_{\epsilon_1} \|\u\|^2_{s}+\epsilon_1\|(\omega\cdot\nabla)\Lambda^{s}a\|_{0}^{2},
\end{align*}
and
\begin{equation*}
\begin{split}
\big|\int(\omega\cdot\nabla)\Lambda^{s}a\cdot\Lambda^{s} f_2 \,dx\big|\lesssim &\|(\omega\cdot\nabla)\Lambda^{s}a\|_{0}\|\Lambda^{s}f_2\|_0
\leq C_{\epsilon_1} \|f_2\|_{s}^{2}+\epsilon_1\|(\omega\cdot\nabla)\Lambda^{s}a\|_{0}^{2}.
\end{split}
\end{equation*}
Substituting the above estimates into \eqref{2-pri-77-1} and then taking $\epsilon_1$ small enough, we conclude that \eqref{pri-199} holds. This completes the proof of Lemma \ref{lemma-pri3-3-1}.
\end{proof}
\vskip .1in
\begin{lemma}\label{lemma-pri3-3}
Let $(a,\u,\b) \in C([0, T];H^N)$ be a solution to the  system \eqref{MHD2}.  Then, for any $t\in[0,T]$, $s\geq0$, we have
\begin{equation}
\begin{split}\label{pri-199-9LLL}
\|&(\omega\cdot\nabla)\Lambda^{s}\b\|_0^2-\frac{d}{dt}\int\Lambda^{s}\u\cdot(\omega\cdot\nabla)\Lambda^{s}\b \,dx\lesssim \|\u\|_{s+1}^{2}+\|(f_2,f_3)\|_{s}^{2}.
\end{split}
\end{equation}
\end{lemma}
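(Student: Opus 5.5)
We mirror the proof of Lemma \ref{lemma-pri3-3-1}, now testing the momentum equation against the magnetic direction rather than the density direction. We apply $\Lambda^s$ to $\eqref{MHD2}_2$ and take the $L^2$ inner product with $(\omega\cdot\nabla)\Lambda^s\b$. This gives
\begin{equation*}
\begin{split}
\|(\omega\cdot\nabla)\Lambda^s\b\|_0^2
&=\int \Lambda^s\partial_t\u\cdot(\omega\cdot\nabla)\Lambda^s\b\,dx
+\int\nabla\Lambda^s(\beta a+\omega\cdot\b)\cdot(\omega\cdot\nabla)\Lambda^s\b\,dx\\
&\quad+\int\Lambda^s\u\cdot(\omega\cdot\nabla)\Lambda^s\b\,dx
-\int\Lambda^s f_2\cdot(\omega\cdot\nabla)\Lambda^s\b\,dx.
\end{split}
\end{equation*}

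The pressure-type term vanishes identically. Integrating by parts, $\nabla$ falls on $(\omega\cdot\nabla)\Lambda^s\b$. Since $\omega\cdot\nabla$ and $\Lambda^s$ commute with $\mathrm{div}$, we get
\[
\int\nabla\Lambda^s(\beta a+\omega\cdot\b)\cdot(\omega\cdot\nabla)\Lambda^s\b\,dx=-\int\Lambda^s(\beta a+\omega\cdot\b)\,(\omega\cdot\nabla)\Lambda^s\mathrm{div}\,\b\,dx=0,
\]
by $\mathrm{div}\,\b=0$. This cancellation is essential, because $a$ and $\omega\cdot\b$ carry no dissipation. The damping term and the $f_2$ term are handled by Cauchy--Schwarz and Young:
\[
\Big|\int\Lambda^s(\u-f_2)\cdot(\omega\cdot\nabla)\Lambda^s\b\,dx\Big|\le \tfrac14\|(\omega\cdot\nabla)\Lambda^s\b\|_0^2+C(\|\u\|_s^2+\|f_2\|_s^2),
\]
and the first piece is absorbed into the left-hand side.

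The time-derivative term is the main step. We write
\[
\int\Lambda^s\partial_t\u\cdot(\omega\cdot\nabla)\Lambda^s\b\,dx=\frac{d}{dt}\int\Lambda^s\u\cdot(\omega\cdot\nabla)\Lambda^s\b\,dx-\int\Lambda^s\u\cdot(\omega\cdot\nabla)\Lambda^s\partial_t\b\,dx.
\]
The first piece gives exactly the $-\frac{d}{dt}$ term in \eqref{pri-199-9LLL} once it is moved to the left. For the second piece we must not bound $\partial_t\b$ directly, since that would lose a derivative on $\b$. Instead, we integrate by parts in $\omega\cdot\nabla$, which is antisymmetric, and only then substitute $\eqref{MHD2}_3$:
\[
-\int\Lambda^s\u\cdot(\omega\cdot\nabla)\Lambda^s\partial_t\b\,dx=\int(\omega\cdot\nabla)\Lambda^s\u\cdot\Lambda^s\big((\omega\cdot\nabla)\u-\omega\,\mathrm{div}\,\u+f_3\big)\,dx.
\]
This is bounded by
\[
\lesssim\|\u\|_{s+1}\big(\|\u\|_{s+1}+\|f_3\|_s\big)\lesssim\|\u\|_{s+1}^2+\|f_3\|_s^2.
\]
Here every derivative lands on $\u$, which is dissipated, so no norm of $\b$ appears. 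Collecting the pieces and absorbing $\tfrac14\|(\omega\cdot\nabla)\Lambda^s\b\|_0^2$ into the left-hand side yields \eqref{pri-199-9LLL}.

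The only real obstacle is this time-derivative term: one must transfer $\omega\cdot\nabla$ onto $\Lambda^s\u$ before using the induction equation. The sign and the $\frac{d}{dt}$ bookkeeping also have to match the statement, with the time derivative of $-\int\Lambda^s\u\cdot(\omega\cdot\nabla)\Lambda^s\b\,dx$ appearing on the left.
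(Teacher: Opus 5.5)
Your proposal is correct and follows the paper's proof essentially step by step. You test $\Lambda^s$ of the momentum equation against $(\omega\cdot\nabla)\Lambda^s\mathbf{B}$, cancel the $\nabla\Lambda^s(\beta a+\omega\cdot\mathbf{B})$ term via $\mathrm{div}\,\mathbf{B}=0$, split off $\frac{d}{dt}\int\Lambda^s\mathbf{u}\cdot(\omega\cdot\nabla)\Lambda^s\mathbf{B}\,dx$, substitute the induction equation, and absorb the damping and $f_2$ terms by Young's inequality. You also state explicitly the integration by parts that moves $\omega\cdot\nabla$ onto $\Lambda^s\mathbf{u}$, which the paper uses only implicitly to reach $\|\mathbf{u}\|_{s+1}^2+\|f_3\|_s^2$; as in the paper, the absorption strictly gives the inequality with a constant such as $\tfrac34$ in front of $\|(\omega\cdot\nabla)\Lambda^s\mathbf{B}\|_0^2$, which is all that is used later.
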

\begin{proof}
%Repeating the similar estimate of  we conclude by \eqref{priori-1}, $L\geq3+r$ and $3+r<M< N-2-r$ that
Applying $\Lambda^{s}$ to $\eqref{MHD2}_{2}$, taking the  $L^2$ inner product of the resultant with $\omega\cdot\nabla\Lambda^{s}\b$,
integrating by parts, and using $\eqref{MHD2}_{1}$ yield
\begin{equation}\begin{split}\label{2-pri-77}
\|\omega\cdot\nabla\Lambda^{s}\b\|_0^2
&=\int (\omega\cdot\nabla)\Lambda^{s}\b\cdot\Lambda^{s}\partial_t\u \,dx-\int(\omega\cdot\nabla)\Lambda^{s}\b\cdot\nabla\Lambda^{s}(\beta a+ \b\cdot\omega)\,dx
\\&\quad-\int (\omega\cdot\nabla)\Lambda^{s}\b\cdot\Lambda^{s}\u \,dx+\int (\omega\cdot\nabla)\Lambda^{s}\b\cdot\Lambda^{s}f_2\,dx.
\end{split}\end{equation}
For the first term on the right-hand side of \eqref{2-pri-77}, we conclude  by $\eqref{MHD2}_3$ that
\begin{equation*}\begin{split}\label{pri-17}
&\int(\omega\cdot\nabla)\Lambda^{s}\b\cdot\Lambda^{s}\partial_t\u \,dx
\\
&\quad=\frac{d}{dt}\int(\omega\cdot\nabla)\Lambda^{s}\b\cdot \Lambda^{s}\u \,dx-\int(\omega\cdot\nabla)\Lambda^{s}\b_t\cdot\Lambda^{s}\u \,dx\\
&\quad=\frac{d}{dt}\int(\omega\cdot\nabla)\Lambda^{s}\b\cdot \Lambda^{s}\u \,dx-\int(\omega\cdot\nabla)\Lambda^{s}(\omega\cdot\nabla \u-\omega\text{div}\u+f_3)\Lambda^{s}\u \,dx.
%&\ =-\frac{d}{dt}\int(\omega\cdot\nabla)\Lambda^Lh\cdot\Lambda^Lu \,dx+\textrm{III}_1+\textrm{III}_2,
\end{split}\end{equation*}
Obviously, using the integration by
parts formula and  $\text{div}\b=0$, we have
$$
-\int(\omega\cdot\nabla)\Lambda^{s}\b\cdot\nabla\Lambda^{s}(\beta a+ \b\cdot\omega)\,dx=\int\big((\omega\cdot\nabla)\Lambda^{s}(\text{div}\b)\big)\Lambda^{s}(\beta a+ \b\cdot\omega)\,dx=0.
$$
It follows from H\"{o}lder's and Young's inequalities, that
\begin{align*}
 \big|\int(\omega\cdot\nabla)\Lambda^{s}\b\cdot\Lambda^{s}\u \,dx\big|\lesssim&\|(\omega\cdot\nabla)\Lambda^{s}\b\|_0\|\u\|_{s}
  \leq C_{\epsilon_1}\|\u\|^2_{s}+\epsilon_1\|(\omega\cdot\nabla)\Lambda^{s}\b\|_0^2,
\end{align*}
\begin{align*}
  \big|\int(\omega\cdot\nabla)\Lambda^{s}(\omega\cdot\nabla \u-\omega\text{div}\u+f_3)\Lambda^{s}u\,dx\big|\lesssim \|\u\|^2_{s+1}+\|f_3\|^2_{s},
\end{align*}
and
\begin{equation*}
\begin{split}
\big|\int(\omega\cdot\nabla)\Lambda^{s}\b\cdot\Lambda^{s} f_2 \,dx\big|\lesssim &\|(\omega\cdot\nabla)\Lambda^{s}\b\|_{0}\|\Lambda^{s}f_2\|_0
\leq C_{\epsilon_1} \|f_2\|_{s}^{2}+\epsilon_1\|(\omega\cdot\nabla)\Lambda^{s}\b\|_{0}^{2}.
\end{split}
\end{equation*}
Substituting the above estimates into \eqref{2-pri-77}, and then taking $\epsilon_1$ small enough, we conclude that \eqref{pri-199-9LLL} holds.
This completes the proof of Lemma \ref{lemma-pri3-3}.
\end{proof}

%The following lemma exploits a new dissipating mechanism for the perturbed density $a$ which is generated from the interaction
%between the velocity field and the constant background magnetic field $w$.

\begin{lemma}\label{lemma-pri2}Suppose $\omega$ satisfies the Diophantine condition \eqref{Diophantine0}
with constants $c$ and $r$. Let $(a,\u,\b) \in C([0, T];H^N)$ be a solution to the  system \eqref{MHD2}.
Then, it holds for any $t\in[0,T]$, $s\geq r$, that
\begin{equation}
\begin{split}\label{pri-199-9LLLL}
&c_1\|(a,\b)\|_{s-1}^2+\frac{d}{dt}\int \Big(\omega\cdot\nabla\Lambda^{s}a\cdot\Lambda^{s}(\u\cdot\omega)-\Lambda^{s}\u\cdot(\omega\cdot\nabla)\Lambda^{s}\b\Big) \,dx\\
&\quad\lesssim \|\u\|_{s+1}^{2}+\|(f_1,f_2,f_3)\|_{s}^{2}
\end{split}
\end{equation}
with a positive constant $c_1$ depending only on $\beta,c,r,|\omega|$ and $|\mathbb T^3|.$
\end{lemma}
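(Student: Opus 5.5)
The plan is to add the two hidden-dissipation estimates and then convert the resulting anisotropic dissipation into an isotropic Sobolev norm using the Diophantine--Poincar\'e inequality. I should say at the outset that I do not see how to reach the stated index $s-1$ with the tools available; the argument below only reaches $H^{s-r}$.

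First, add \eqref{pri-199} from Lemma \ref{lemma-pri3-3-1} and \eqref{pri-199-9LLL} from Lemma \ref{lemma-pri3-3}, both at the same index $s$. The time-derivative terms combine into exactly the functional in \eqref{pri-199-9LLLL}, namely
\[
\frac{d}{dt}\int \Big(\omega\cdot\nabla\Lambda^{s}a\cdot\Lambda^{s}(\u\cdot\omega)-\Lambda^{s}\u\cdot(\omega\cdot\nabla)\Lambda^{s}\b\Big)\,dx .
\]
The right-hand sides add up to at most $C(\|\u\|_{s+1}^2+\|(f_1,f_2,f_3)\|_s^2)$. On the left this leaves
\[
\beta\|(\omega\cdot\nabla)\Lambda^s a\|_0^2+\|(\omega\cdot\nabla)\Lambda^s\b\|_0^2 .
\]

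Second, bound this dissipation from below by Lemma \ref{Diophantine-inequality}. Both $a$ and $\b$ have zero mean by \eqref{junzhi1}, so the lemma applies to each; it is used componentwise for $\b$. Applying it with index $s-r\ge 0$, which is permitted because $s\ge r$, gives
\[
\|a\|_{s-r}\le C\|\omega\cdot\nabla\Lambda^s a\|_0 ,\qquad \|\b\|_{s-r}\le C\|\omega\cdot\nabla\Lambda^s\b\|_0 .
\]
This produces a lower bound $c_1\|(a,\b)\|_{s-r}^2$ with $c_1=\min\{\beta,1\}/C^2$. The constant depends only on $\beta,c,r,|\omega|$ and the torus, as claimed.

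Third, the statement asks for the stronger norm $\|(a,\b)\|_{s-1}$, and this is the main obstacle. Since $r>2$, one has $s-1>s-r$, so the bound from the second step is strictly weaker. Lemma \ref{Diophantine-inequality} with index $s-1$ would require $\|\omega\cdot\nabla\Lambda^{s-1+r}f\|_0$, which is not among the dissipated quantities.

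My first attempt would be to recover the missing $r-1$ derivatives through the wave structure. I would rerun Lemmas \ref{lemma-pri3-3-1}--\ref{lemma-pri3-3} at index $s-1+r$ and absorb the resulting $\|\u\|_{s+r}^2$ into the velocity damping. I do not expect this to work: $\|\u\|_{s+r}^2$ is not controlled by the right-hand side $\|\u\|_{s+1}^2$ of \eqref{pri-199-9LLLL}.

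A single-mode linear test strongly suggests that the $s-1$ version cannot hold with a uniform $c_1$. Take $\u=0$ at some instant and $(a,\b)$ concentrated on a lattice mode $\mathbf k$ with $|\omega\cdot\mathbf k|\sim|\mathbf k|^{-r}$. The available dissipation is then of size $|\mathbf k|^{2s-2r}$, while $\|(a,\b)\|_{s-1}^2\sim|\mathbf k|^{2s-2}$.

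So the plan yields \eqref{pri-199-9LLLL} with $\|(a,\b)\|_{s-1}^2$ replaced by $\|(a,\b)\|_{s-r}^2$, which is all the hidden dissipation provides. I expect that this weaker form is also what the later Lyapunov argument with $s=r$ actually uses, since it only needs $\|(a,\u,\b)\|_0^2$. I would check that against the statement before relying on the $s-1$ index anywhere downstream.
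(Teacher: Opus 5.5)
Your two steps are exactly the paper's proof of Lemma \ref{lemma-pri2}. The paper also adds \eqref{pri-199} and \eqref{pri-199-9LLL} and applies Lemma \ref{Diophantine-inequality} at index $s-r$, and its two intermediate inequalities carry only $c_0\beta\|a\|_{s-r}^2$ and $c_0\|\b\|_{s-r}^2$. The closing sentence of that proof then asserts the conclusion with $s-1$ in place of $s-r$, and nothing justifies this. Your objection is correct: the $H^{s-1}$ form does not follow, it is in fact false, and the lemma should read $c_1\|(a,\b)\|_{s-r}^2$. As you expected, nothing downstream is affected. The lemma is only invoked at $s=r$ in \eqref{pri-199-9LLLL-L}, and \eqref{bu-pri-2-2-22-2A} keeps only $c_1\|(a,\b)\|_0^2$, which is precisely what the $s-r$ version supplies.

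Your counterexample needs two fixes.

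\emph{Choice of mode.} \eqref{Diophantine0} is only a lower bound, so you may not assume that modes with $|\omega\cdot\mathbf{k}|\sim|\mathbf{k}|^{-r}$ exist. Use Dirichlet's theorem instead: for every $\omega\in\mathbb{R}^3$ there are $\mathbf{k}\in\mathbb{Z}^3\setminus\{0\}$ with $|\omega\cdot\mathbf{k}|\lesssim|\mathbf{k}|^{-2}$. Since $\omega\cdot\mathbf{k}\neq0$ for $\mathbf{k}\neq0$ by \eqref{Diophantine0}, these can be taken with $|\mathbf{k}|\to\infty$. So the loss is at least two derivatives for every admissible $\omega$, and the index $s-1$ is never attainable.

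\emph{Rigor for the nonlinear system.} Take $\u_0=0$, $\b_0=0$, $a_0=\epsilon\cos(\mathbf{k}\cdot x)$ and evaluate at $t=0$. There $f_1=f_3=0$ and $f_2=O(\epsilon^2)$, so the right-hand side of \eqref{pri-199-9LLLL} is $O(\epsilon^4)$. Since $\u_0=0$, $\b_0=0$ and $\u_t(0)=-\rho_0^{-1}p'(\rho_0)\nabla a_0$, the time derivative of the functional equals $-\beta\|(\omega\cdot\nabla)\Lambda^s a_0\|_0^2+O(\epsilon^3)$. The inequality would then force $c_1|\mathbf{k}|^{2s-2}\lesssim|\mathbf{k}|^{2s-4}+O_{\mathbf{k}}(\epsilon)$, which fails for $|\mathbf{k}|$ large and $\epsilon$ small.
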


\begin{proof} For the density $a$, it follows from \eqref{junzhi1} that
$\int a\,dx=0$. Then %summing up for any $0\leq s\leq s_1$ in \eqref{pri-199}, and
using \eqref{pri-199} and Lemma \ref{Diophantine-inequality}, we  get
\begin{equation*}
\begin{split}\label{pri-199-99LLL}
& c_0\beta \|a\|_{s-r}^2+\frac{d}{dt}\int \omega\cdot\nabla\Lambda^{s}a\cdot\Lambda^{s}(\u\cdot\omega)\,dx\lesssim \|\u\|_{s+1}^{2}+\|(f_2,f_3)\|_{s}^{2}
\end{split}
\end{equation*}
with a positive constant $c_0$ depending only on $c,r,|\omega|$ and $|\mathbb T^3|.$ Similarly, for the magnetic field $\b$,  %summing up for any $0\leq s\leq s_2$ in \eqref{pri-199-9LLL}, employing \eqref{EQUIVN}
recalling $\int \b \,dx=0$ in  (\ref{junzhi1}) and using  \eqref{pri-199-9LLL} and Lemma \ref{Diophantine-inequality}, we also  infer that
\begin{equation}
\begin{split}\label{pri-199-9LLL-LL}
&c_0\|\b\|_{s-r}^2-\frac{d}{dt}\int\Lambda^{s}\u\cdot(\omega\cdot\nabla)\Lambda^{s}\b \,dx\lesssim \|\u\|_{s+1}^{2}+\|(f_2,f_3)\|_{s}^{2}.
\end{split}
\end{equation}
%Thus, taking $s_2=s_1+r+1$,  we conclude  from \eqref{pri-199-9LLL-LL}, that
%\begin{equation}\begin{split}\label{pri-199-99}\|\b\|_{s_1+1}^2-\frac{d}{dt}\sum_{0\leq s\leq s_1+r+1}\int\Lambda^{s}\u\cdot(\omega\cdot\nabla)\Lambda^{s}\b \,dx\lesssim \|\u\|_{s_1+r+2}^{2}+\|(f_2,f_3)\|_{s_1+r+1}^{2}.\end{split}\end{equation}
 %Multiplying \eqref{pri-199-99LLL} by sufficiently small positive constant,  adding up to \eqref{pri-199-99},  and then  using  the embedding relation,  we finally deduce that \eqref{pri-199-9LLLL} holds.
 Thus, the desire conclusion follows the above two inequalities. This completes the proof of Lemma \ref{lemma-pri2}.
\end{proof}

\vskip .3in
\section{The proof of Theorem \ref{dingli}}
\label{sec:pro}

This section completes the proof of Theorem \ref{dingli}. The framework of the proof is the
bootstrapping argument.

\begin{proof}[Proof of Theorem \ref{dingli}]  First of all, the MHD system in \eqref{MHD1} with any
initial data $(a_0, \u_0,  \b_0)$ in $ H^N(\T^3)$ has a unique local solution.
This follows from a standard contraction mapping argument (see, e.g.,
\cite{MM}). The
bootstrapping argument is employed to prove the global existence and stability.
It starts with the ansatz that the solution $(a(t), \u(t),
\b(t))$ to \eqref{MHD1} satisfies
\begin{align}\label{ping23}
\sup_{t\in[0,T]}(\|a\|_{{N}}+\|\u\|_{{N}}+\|\b\|_{{N}})\le \delta,
\end{align}
for some $0<\delta<1$ to be determined later. We then show that
\begin{align*}%\label{ping231}
\sup_{t\in[0,T]}(\|a\|_{{N}}+\|\u\|_{{N}}+\|\b\|_{{N}})\le \frac{\delta}{2}.
\end{align*}
The a priori assumption \eqref{pri-2} is guaranteed by the embedding relation and the smallness of the norm stated in \eqref{ping23}.
On the one hand, taking $s=r+1$ in \eqref{pri-2-2}, yields that
\begin{equation}\label{pri-2-2-22}
\begin{split}
\frac{d}{dt}E_{r+1}+\|(\u,\Lambda^{r+1}\u)\|_{0}^2
%&\quad\lesssim\|a\|_3\|\u\|_{4r+2}\big(\|\u\|_3\|\u\|_{4r+2}+\|a\|_3\|a\|_{4r+2}+\|(a,\b)\|_3+\|(a,\b)\|_{4r+2}\\
%&\qquad+\|\b\|_2\|\b\|_{4r+2}+\|\b\|^{2}_3+\|\b\|_3\|a\|_{4r+2}+\|a\|_{4r+2}\|\b\|^{2}_3\big)\\
%&\qquad+(1+\|(a,\u,\b)\|_3)\|(a,\u,\b)\|_3\|(a,\u,\b)\|^2_{4r+2}\\
\lesssim(1+\|(a,\u,\b)\|^2_3)\|(a,\u,\b)\|_3\|(a,\u,\b)\|^2_{r+1}.
\end{split}\end{equation}
On the other hand,  choosing  $s=r$ in \eqref{pri-199-9LLLL} implies that
\begin{equation}
\begin{split}\label{pri-199-9LLLL-L}
&c_1\|(a,\b)\|_{r-1}^2+\frac{d}{dt}\int \Big(\omega\cdot\nabla\Lambda^{r}a\cdot\Lambda^{r}(\u\cdot\omega)-\Lambda^{r}\u\cdot(\omega\cdot\nabla)\Lambda^{r}\b\Big) \,dx\\&\quad\leq C\|\u\|_{r+1}^{2}+C\|(f_1,f_2,f_3)\|_{r}^{2}.
\end{split}
\end{equation}
Hence, let
 $A$ be a positive  constant determined later. We infer from \eqref{pri-199-9LLLL-L} and \eqref{pri-2-2-22} that
\begin{equation}\label{pri-2-2-22-2A}
\begin{split}
&\frac{d}{dt}\left\{AE_{r+1}(t)+\int \Big(\omega\cdot\nabla\Lambda^{r}a\cdot\Lambda^{r}(\u\cdot\omega)-\Lambda^{r}\u\cdot(\omega\cdot\nabla)\Lambda^{r}\b\Big) \,dx\right\}\\
&\quad\quad+A\|(\u,\Lambda^{r+1}\u)\|_{0}^2-C\|\u\|_{r+1}^2+c_1\|(a,\b)\|_{r-1}^2\\
&\quad\lesssim (1+\|(a,\u,\b)\|^2_3)\|(a,\u,\b)\|_3\|(a,\u,\b)\|^2_{r+1}+\|(f_1,f_2,f_3)\|_{r}^{2}.
\end{split}\end{equation}
Define
\begin{align*}%\label{ping31}
{\mathcal{E}(t)}=&AE_{r+1}(t)+\int \Big(\omega\cdot\nabla\Lambda^{r}a\cdot\Lambda^{r}(\u\cdot\omega)-\Lambda^{r}\u\cdot(\omega\cdot\nabla)\Lambda^{r}\b\Big) \,dx.
\end{align*}
Then, from \eqref{bu-2026-6-27-1} and the
Cauchy--Schwarz inequality,  we may choose $A$ sufficiently large, depending only on $r$ and $\omega$ such that
\begin{equation}\label{bu2026-6-28-1}c_1\|\u\|_{r+1}^2\leq A\|(\u,\Lambda^{r+1}\u)\|_{0}^2-C\|\u\|^2_{r+1}
\end{equation} and
\begin{equation}\label{bu2026-6-28-2}
\|(a,\u,\b)\|^2_{r+1}\leq {\mathcal{E}(t)}\leq 2A\|(a,\u,\b)\|^2_{r+1}.
\end{equation}
We infer from \eqref{pri-2-2-22-2A} and \eqref{bu2026-6-28-1} that
\begin{equation}\label{bu-pri-2-2-22-2A}
\begin{split}
&\frac{d}{dt}{\mathcal{E}(t)}+c_1\|(a,\u,\b)\|_{0}^2\lesssim (1+\|(a,\u,\b)\|^2_3)\|(a,\u,\b)\|_3\|(a,\u,\b)\|^2_{r+1}+\|(f_1,f_2,f_3)\|_{r}^{2}.
\end{split}\end{equation}
It follows from Lemma \ref{Hk-estimate} and $\|\cdot\|_{L^\infty}\lesssim\|\cdot\|_2$ that
\begin{equation}\begin{split}\label{4.6AAAA}
\|f_1\|_{r}\lesssim\|a\u\|_{r+1}\lesssim\|(a,\u)\|_{L^\infty}\|(a,\u)\|_{r+1}\lesssim\|(a,\u)\|_{2}\|(a,\u)\|_{r+1},
\end{split}\end{equation}
and
\begin{equation}\begin{split}\label{bu4.7AAAA}
\|f_3\|_{r}\lesssim&\|(\u\cdot\nabla) \b\|_{r}+\|( \b\cdot\nabla)\u\|_{r}+\| \b\div \u\|_{r}\\
\lesssim&\|(\u,\nabla \b)\|_{L^\infty}\|(\u,\nabla  \b)\|_{r}+\| (\b,\nabla \u)\|_{L^\infty}\| (\b,\nabla \u)\|_r\\
\lesssim&\|(\u, \b)\|_3\|(\u, \b)\|_{r+1}.
\end{split}\end{equation}
It remains to bound the term $f_{2}.$  We  deduces by \eqref{pri-5}, \eqref{pri-6}, Lemma \ref{Hk-estimate},
and the embedding relation, that
\begin{equation*}
\begin{split}
\|a\partial_t\u\|_{r}&\lesssim\|a\|_{L^\infty}\|\partial_t\u\|_{r}+\|a\|_{r}\|\partial_t\u\|_{L^\infty}\\
&\lesssim \|a\|_3\big(\|\u\|_3\|\u\|_{r+1}+\|a\|_3\|a\|_{r+1}+\|(a,\u,\b)\|_{r+1}+\|\b\|_2\|\b\|_{r+1}\\
&\quad+\|\b\|^{2}_3+\|\b\|_3\|a\|_{r+1}+\|a\|_{r+1}\|\b\|^{2}_3\big)+\|a\|_{r}\big(1+\|\u,\b\|_{2}\big)\|(a,\u,\b)\|_3\\
&\lesssim (1+\|(a,\u,\b)\|_3)\|(a,\u,\b)\|_3\|(a,\u,\b)\|_{r+1}.
\end{split}
\end{equation*}
It follows from \eqref{pri-2}, Lemma \ref{Hk-estimate-1}, and the embedding inequality that
\begin{equation*}\begin{split}
\|(p'(a+1)-p'(1))\nabla a\|_{r}
&\lesssim\|p'(a+1)-p'(1)\|_{L^\infty}\|a\|_{r+1}+\|p'(a+1)-p'(1)\|_{r}\|\nabla a\|_{L^\infty}\\
&\lesssim\|a\|_{L^\infty}\|a\|_{r+1}+\|a\|_{r}\|\nabla a\|_{L^\infty}
\\&\lesssim\|a\|_3\|a\|_{r+1}.
\end{split}\end{equation*}
Employing  Lemma \ref{Hk-estimate} and \eqref{pri-2} yields that
\begin{equation*}\begin{split}
&\left\|-\rho(\u\cdot\nabla)\u-a\u+(\b\cdot\nabla)\b-\frac{1}{2}\nabla|\b|^2\right\|_{r}\\
&\quad\lesssim \|\rho \u,\nabla\u\|_{L^\infty}\|\rho \u,\nabla\u\|_{r}+\|(a,\u)\|_{L^\infty}\|(a,\u)\|_{r}+\|(\b,\nabla\b)\|_{L^\infty}\|(\b,\nabla\b)\|_{r}\\
&\quad\lesssim(\|\rho\|_{L^\infty}\|\u\|_{L^\infty}+\|\nabla\u\|_{L^\infty})(\|\rho\|_{L^\infty}\|\u\|_{r}+\|\rho\|_{r}\|\u\|_{L^\infty}
+\|\u\|_{r+1})
+\|(a,\u,\b)\|_3\|(a,\u,\b)\|_{r+1}\\
&\quad\lesssim\|\u\|_3(\|\u\|_{r+1}+\|\rho\|_{r}\|\u\|_{3})+\|(a,\u,\b)\|_3\|(a,\u,\b)\|_{r+1}\\
&\quad\lesssim\|(a,\u,\b)\|_3\|(a,\u,\b)\|_{r+1}+\|\u\|_3^2(1+\|a\|_{r})
\\&\quad\lesssim (1+\|(a,\u,\b)\|_3)\|(a,\u,\b)\|_3\|(a,\u,\b)\|_{r+1}.%\lesssim&(1+\|(a,\u),h\|_3)\|(a,\u),h\|_3\|(a,\u),h\|_{m+1}.
\end{split}\end{equation*}
Combining  with the above three inequalities,  we have
\begin{equation}\begin{split}\label{4.8AAAA}
\|f_2\|_{r}\lesssim (1+\|(a,\u,\b)\|_3)\|(a,\u,\b)\|_3\|(a,\u,\b)\|_{r+1}.
\end{split}\end{equation}
Then inserting \eqref{4.6AAAA}-\eqref{4.8AAAA} into \eqref{bu-pri-2-2-22-2A} gives rise to
\begin{equation}\label{pri-2-2-22-2}
\begin{split}
\frac{d}{dt}{\mathcal{E}(t)}+c_1\|(a,\u,\b)\|_{0}^2\lesssim \big(1+\|(a,\u,\b)\|_3\big)^3\|(a,\u,\b)\|_3\|(a,\u,\b)\|^2_{r+1}.
\end{split}\end{equation}
For any ${N}>3r+3$, by the interpolation inequality and the embedding relation, it holds that
\begin{equation*}\begin{split}
\|(a,\u,\b)\|^3_{r+1}&\lesssim \|(a,\u,\b)\|_{0}^{\frac{3(N-r-1)}{N}}\|(a,\u,\b)\|_{N}^{\frac{3(r+1)}{N}}
\\&\lesssim\|(a,\u,\b)\|_{0}^{2}\|(a,\u,\b)\|_{0}^{\frac{3(N-r-1)}{N}-2}\|(a,\u,\b)\|_{N}^{\frac{3(r+1)}{N}}
\\&\lesssim\|(a,\u,\b)\|_{0}^{2}\|(a,\u,\b)\|_{N}^{\frac{3(N-r-1)}{N}-2}\|(a,\u,\b)\|_{N}^{\frac{3(r+1)}{N}}
\\&\lesssim\|(a,\u,\b)\|_{0}^{2}\|(a,\u,\b)\|_{N},
\end{split}\end{equation*}
which together with \eqref{ping23} and the embedding relation yields that
\begin{equation*}\label{pri-2-2-22-4}
\begin{split}\big(1+\|(a,\u,\b)\|_3\big)^3\|(a,\u,\b)\|_3\|(a,\u,\b)\|^2_{r+1}
&\lesssim \big(1+\|(a,\u,\b)\|_3\big)^3\|(a,\u,\b)\|^3_{r+1}
\\&\lesssim\big(1+\|(a,\u,\b)\|_3\big)^3\|(a,\u,\b)\|_{N}\|(a,\u,\b)\|_{0}^{2}
\\&\lesssim\delta\|(a,\u,\b)\|_{0}^{2}.
\end{split}\end{equation*}
Plugging the above estimate  into \eqref{pri-2-2-22-2}, and then taking $\delta$ small enough such that $C\delta\leq\frac{c_1}{2}$,  we  deduce that
\begin{equation}\label{pri-2-2-22-6}
\frac{d}{dt}{\mathcal{E}(t)}+\frac{c_1}{2}\|(a,\u,\b)\|_{0}^2\leq0.
\end{equation}
For any ${N}> 3r+3$, by \eqref{bu2026-6-28-2}, the interpolation inequality and the  assumption (\ref{ping23}),  we have
\begin{equation*}%\label{pri-2-2-22-7}
 \big(\mathcal{E}(t)\big)^{\frac{N}{N-r-1}}\leq \big(2A\|(a,\u,\b)\|^2_{r+1}\big)^{\frac{N}{N-r-1}}\lesssim\|(a,\u,\b)\|_{0}^{2}\|(a,\u,\b)\|_{N}^{\frac{2r+2}{N-r-1}}
\leq C\delta^{\frac{2r+2}{N-r-1}}\|(a,\u,\b)\|_{0}^{2},
\end{equation*}
which along with \eqref{pri-2-2-22-6} yields the following Lyapunov-type inequality
\begin{equation*}%\label{pri-2-2-22-8-1}
\frac{d}{dt}{\mathcal{E}(t)}+c\mathcal{E}(t)^{\frac{N}{N-r-1}}\leq0.
\end{equation*}
It then follows easily that
\begin{align}\label{pri-2-2-22-8}
{\mathcal{E}(t)}\le C(1+t)^{-\frac{N-r-1}{r+1}}.
\end{align}
Taking $s={N}$ in \eqref{pri-2-2} and using $\eqref{bu-2026-6-27-1}_1$ and the fact, $\|\cdot\|^2_{N}\approx\|\cdot\|^2_{0}+\|\Lambda^N(\cdot)\|^2_{0},$ give rise to
\begin{equation}\label{pri-2-2-22-9}
\begin{split}
\frac{d}{dt}E_{N}+c_2\|\u\|_{N}^2&\leq C(1+\|(a,\u,\b)\|^2_3)\|(a,\u,\b)\|_3\|(a,\u,\b)\|^2_{N}\\
&\leq C(1+\|(a,\u,\b)\|^2_3)\|(a,\u,\b)\|_3E_{N}.
\end{split}\end{equation}
From \eqref{pri-2-2-22-8}, \eqref{bu2026-6-28-2}, and $N>3r+3$, we have
\begin{align*}
\int_0^T(1+\|(a,\u,\b)\|^2_3)\|(a,\u,\b)\|_3\,d\tau\lesssim\int_0^T(1+t)^{-\frac{N-r-1}{2r+2}}\,dt\le C.
\end{align*}
Then, applying Gronwall's inequality to \eqref{pri-2-2-22-9} and using \eqref{bu-2026-6-27-1} and \eqref{2026-6-27-3-1} yield
\begin{equation}\begin{split}\label{bu-pri-2-2-22-10}
\big\|(a,\u,\b)\big\|^2_{N}+\int_0^t\|\u(\tau)\|_{N}^2\,d\tau&\lesssim E_{N}(t)+c_2\int_0^T\|\u(\tau)\|_{N}^2d\tau\\
&\lesssim E_{N}(0)\lesssim\big\|(a(0),\u(0),\b(0))\big\|^2_{N}\le C\varepsilon^2.\end{split}\end{equation}
Furthermore, taking $\varepsilon$ small enough in \eqref{2026-6-27-3-1} so that $C\varepsilon^2\le \frac{\delta^2}{4}$, we deduce
\begin{align}\label{ping23-3}
\sup_{t\in[0,T]}(\|a\|_{{N}}+\|\u\|_{{N}}+\|\b\|_{{N}})\le \frac{\delta}{2},
\end{align}
which together with  the continuity argument  implies that the local solution
can be extended as a global one in time.
%Moreover, from \eqref{pri-2-2-22-8}, we also have the following decay rate
%\begin{align*}%\label{ping40}\norm{a(t)}{{4r+2}}+\norm{\u(t)}{{4r+2}}+\norm{\b(t)}{{4r+2}}\le  C(1+t)^{-\frac{N-4r-2}{2(2r+2)}}.\end{align*}

Moreover, for any $r+1\leq\gamma< N $,  thanks to the following interpolation inequality
\begin{align*}%\label{ming22}
	\norm{\cdot}{{\gamma}}\le\norm{\cdot}{{r+1}}^{\frac{{N}-\gamma}{{N}-r-1}}
	\norm{\cdot}{{N}}^{\frac{\gamma-r-1}{{N}-r-1}},
\end{align*}
we can conclude by \eqref{bu2026-6-28-2}, \eqref{pri-2-2-22-8} and \eqref{bu-pri-2-2-22-10} that  the   decay rate for the higher order energy
\begin{align*}%\label{ping44}
\norm{a(t)}{{\gamma}}+\norm{\u(t)}{{\gamma}}+\norm{\b(t)}{{\gamma}}\le C(1+t)^{-\frac{N-\gamma}{2r+2}}.
\end{align*}
This completes the proof of Theorem \ref{dingli}.\end{proof}

\bigskip
\section*{Declarations}

%\noindent{\bf Ethical Approval }\

 %We certify that this manuscript is original and has not been published and will not be submitted elsewhere for
%publication while being considered by Mathematische Annalen.

\vskip .1in
\noindent{\bf Competing interests  }\

On behalf of all authors, the corresponding author states that there is no potential conflicts of interest with respect to the research of this article.
\vskip .1in
\noindent{\bf Authors' contributions   }\

This work was carried out in collaboration of three authors. Qiao,
Wu, Xu,  and Zhai proposed the question and presented some ideas of the proof. Xu carried out the  study of the existence and drafted the manuscript.    All authors read and approved the final manuscript.

\vskip .1in
\noindent{\bf Funding   }\

 Wu was partially supported by the National Science Foundation of the United
States under DMS 2104682 and DMS 2309748. Xu was partially supported by   the
National Natural Science Foundation of China 12326430
and the  Natural Science Foundation of Shandong Province ZR2026MS0022.
Zhai was partially supported by the Guangdong Provincial Natural Science
Foundation under grant  2024A1515030115.
\vskip .1in
\noindent{\bf Availability of data and materials  }\

Data and materials  sharing
not applicable to this article as no data and  materials
 were generated or analyzed during the current study.

\end{document}